\def\into{\hookrightarrow}
\def\onto{\twoheadrightarrow}
\def\simto{\overset{\sim}{\to}}
\def\kk{\mathbbm{k}}
\def\C{\ensuremath{\mathbbm{C}}}
\def\Z{\mathbbm{Z}}
\def\End{\mathrm{End}}
\def\Hom{\mathrm{Hom}}
\def\GL{\mathrm{GL}}
\newtheorem{theorem}{Theorem}[section]
\newtheorem{lemma}[theorem]{Lemma}
\newtheorem{remark}[theorem]{Remark}
\newtheorem{corollary}[theorem]{Corollary}
\newtheorem{example}[theorem]{Example\/}
\newtheorem{proposition}[theorem]{Proposition}
\numberwithin{equation}{section}
\date{November 24, 2020}
\title[Braid groups of normalizers of reflection subgroups]{Braid groups of normalizers of reflection subgroups}
\author{Thomas Gobet}
\address{Institut Denis Poisson, CNRS UMR 7350\\
Facult\'e des Sciences et Techniques\\
Universit\'e de Tours\\
Parc de Grandmont\\
37200 Tours, France}
\email{thomas.gobet@lmpt.univ-tours.fr}
\author{Anthony Henderson}
\address{Sydney Mathematical Research Institute A14\\
University of Sydney\\
NSW 2006, Australia}
\email{anthony.henderson@sydney.edu.au}
\author{Ivan Marin}
\address{Laboratoire Ami\'enois de Math\'ematique Fondamentale et Appliqu\'ee, CNRS UMR 7352\\
Universit\'e de Picardie Jules Verne\\
33 rue Saint Leu\\
80039 Amiens, France}
\email{ivan.marin@u-picardie.fr}
\thanks{This research was supported by Australian Research Council grant DP170101579.}
\begin{document}

\begin{abstract}
Let $W_0$ be a reflection subgroup of a finite complex reflection group $W$, and let $B_0$ and $B$ be their respective braid groups. In order to construct a Hecke algebra $\widetilde{H}_0$ for the normalizer $N_W(W_0)$, one first considers a natural subquotient $\widetilde{B}_0$ of $B$ which is an extension of $N_W(W_0)/W_0$ by $B_0$. We prove that this extension is split when $W$ is a Coxeter group, and deduce a standard basis for the Hecke algebra $\widetilde{H}_0$. We also give classes of both split and non-split examples in the non-Coxeter case. 
\end{abstract}

\maketitle

\tableofcontents

\section{Introduction}

Let $W$ be a finite complex reflection group and $W_0$ a reflection subgroup of $W$. We write $N_W(W_0)$ for the normalizer of $W_0$ in $W$. There are various cases in which $N_W(W_0)$ is a semidirect product of $W_0$ and some complementary subgroup, i.e.\ there is a known splitting of the short exact sequence of groups
\begin{equation} \label{eq:ses-triv}
1 \to W_0 \to N_W(W_0) \to N_W(W_0)/W_0 \to 1.
\end{equation}
For one such case: when $W$ is a finite Coxeter group, a choice of simple system for $W$ determines a complement of $W_0$ in $N_W(W_0)$, as observed by Howlett~\cite{HOWLETT} and recalled in greater generality in Lemma~\ref{lem:semi-direct} below. For another: when $W_0$ is a parabolic subgroup of $W$, then it always has a complement in $N_W(W_0)$, as shown by Muraleedaran and Taylor~\cite{TAYLORNORM}. 
On the other hand, there are cases where no complement exists, i.e.\ the short exact sequence~\eqref{eq:ses-triv} does not split: see Section \ref{ss:no-complement}.

Let $B$ be the braid group associated to the complex reflection group $W$, defined topologically as in~\cite{BMR}. We can identify the braid group $B_0$ of $W_0$ with a subquotient of $B$.
In~\cite[Section 2.2]{YH2} the third author introduced another subquotient $\widetilde{B}_0$ of $B$, which can be thought of loosely as the braid group of $N_W(W_0)$, although it actually depends on the pair $(W,W_0)$. (The notation $\widetilde{B}_0$ is new to this paper, and refers to the $G=N_W(W_0)$ special case of the group denoted $B_G$ in \textit{loc.\ cit}.) The definition of $\widetilde{B}_0$, recalled in Section~\ref{sec:definitions} below, is such that we have a natural short exact sequence of groups
\begin{equation} \label{eq:ses}
1 \to B_0 \to \widetilde{B}_0 \to N_W(W_0)/W_0 \to 1,
\end{equation}
lifting the short exact sequence~\eqref{eq:ses-triv}.

The main question addressed in this paper is: when can we write $\widetilde{B}_0$ as a semidirect product of $B_0$ and some complementary subgroup? More precisely, assuming we are in a case where we have a splitting of~\eqref{eq:ses-triv}, does that splitting lift to a splitting of~\eqref{eq:ses}? 

One main reason for considering these questions, which was in fact the original motivation, is the study of the Hecke algebra $\widetilde{H}_0$ associated to $N_W(W_0)$, which was defined in~\cite{YH2} as a certain quotient of the group algebra of $\widetilde{B}_0$. A splitting of~\eqref{eq:ses} implies a semidirect product decomposition of $\widetilde{H}_0$. These algebras $\widetilde{H}_0$ are the building blocks
of the algebra $\mathcal{C}_W$ constructed in \cite{YH1} to describe the `Artin part' of the Yokonuma--Hecke algebra, in the sense that $\mathcal{C}_W$ is Morita-equivalent to a direct sum of such Hecke algebras $\widetilde{H}_0$. As explained in~\cite{YH1}, when $W$ is the symmetric group, the algebra $\mathcal{C}_W$ coincides with the diagram algebra of braids and ties of Aicardi and Juyumaya.

In Section~\ref{sec:coxeter} we will show that when $W$ is a finite Coxeter group and $W_0$ is an arbitrary reflection subgroup, the known splitting of~\eqref{eq:ses-triv} does lift to a splitting of~\eqref{eq:ses}; see Theorem~\ref{thm:splitfiniteCox}. 
In Section~\ref{ss:std} we use this to define a standard basis, and a presentation, of $\widetilde{H}_0$ in this case. Our proof of the splitting of~\eqref{eq:ses} applies also when $W_0$ is a reflection subgroup of an infinite Coxeter group $W$, on the assumption that the Artin group of $W_0$ occurs as a subquotient of the Artin group of $W$ in the same manner as in the finite case; see~\eqref{eq:assumption} for the precise statement of this assumption.    

In Section~\ref{sec:groupoids} we explain an alternative proof of the splitting of~\eqref{eq:ses} in the Coxeter case, which is in some ways more conceptual; see Theorem~\ref{thm:splitCox-groupoid}. One aspect of this second proof may be of independent interest: in Section~\ref{ss:groupoids} we give a groupoid description of the complement of $W_0$ in $N_W(W_0)$ when $W$ is a (possibly infinite) Coxeter group and $W_0$ is an arbitrary reflection subgroup, which was inspired by, but is different from, the description given by Brink and Howlett~\cite{BRINKHOWLETT} in the case where $W_0$ is parabolic (see Remark~\ref{rem:brinkhowlett} for a comparison).

The proofs we give in Sections~\ref{sec:coxeter} and~\ref{sec:groupoids} are both intrinsically Coxeter-theoretic, which suggests that, when we return to the setting of general complex reflection groups $W$, the splitting of~\eqref{eq:ses} can most reasonably be expected in those cases which are most Coxeter-like. In Section~\ref{sec:gd1n} we will show that when $W=G(d,1,n)$ and $W_0=G(d,1,k)$, the obvious splitting of~\eqref{eq:ses-triv} does lift to a splitting of~\eqref{eq:ses}. 
On the other hand, in Section~\ref{ss:central} we will give examples where~\eqref{eq:ses-triv} splits but~\eqref{eq:ses} does not split.

\textbf{Acknowledgments}. We are grateful to Bob Howlett, Steen Ryom-Hansen, Mario Salvetti, and Don Taylor for helpful conversations, and to an anonymous referee for their suggestions.


\section{Definitions and preliminaries}
\label{sec:definitions}

The goal of this section is to recall the definitions referred to in the introduction, in particular of the group $\widetilde{B}_0$, the short exact sequence~\eqref{eq:ses}, and the Hecke algebra $\widetilde{H}_0$. For more details, see~\cite{BMR,YH2}.  

Let $W< \GL_n(\C)$ be a finite complex reflection group, let $\mathcal{A}$ denote the arrangement of reflecting hyperplanes in $\C^n$ for the reflections in $W$, and let $X= \C^n \setminus \bigcup_{H\in\mathcal{A}} H$ be the complement of that arrangement, on which $W$ acts freely. We fix a base-point $\tilde{x}\in X$, let $[\tilde{x}]_W$ denote its image in the quotient $X/W$, and define the pure braid group $P=\pi_1(X,\tilde{x})$ and braid group $B=\pi_1(X/W,[\tilde{x}]_W)$. We denote by $\pi : B \onto W$ the natural projection, whose kernel is identified with $P$.

Recall from~\cite[Theorem 2.17(1)]{BMR} that $B$ is generated by the elements known as \emph{braided reflections} 
around the hyperplanes in $\mathcal{A}$. For $H\in\mathcal{A}$, let $m_H$ denote the order of the cyclic subgroup of $W$ fixing $H$, let $s_H\in W$ denote the distinguished reflection with hyperplane $H$, i.e.\ the one with determinant $\exp(2\pi\sqrt{-1}/m_H)$, and let $\sigma_H\in B$ be a braided reflection around $H$ such that $\pi(\sigma_H)=s_H$, as in~\cite[Lemma 2.14]{BMR}. Such a braided reflection $\sigma_H$ is unique up to $P$-conjugacy; more generally, if $\beta\in B$, then $\beta\sigma_H\beta^{-1}$ is a braided reflection around the hyperplane $\pi(\beta)(H)$. The element $\sigma_H^{m_H}\in P$ and its $P$-conjugates are the homotopy classes of the particular loops in $X$ based at $\tilde{x}$ which are known as \emph{meridians} around $H$. By~\cite[Theorem 2.18(1)]{BMR}, $P$ is generated by the set of all the meridians around hyperplanes in $\mathcal{A}$. In fact, by~\cite[Proposition 2.8]{BMR}, it suffices to take one (well-chosen) meridian per hyperplane.

Now let $W_0$ be a reflection subgroup of $W$, let $\mathcal{A}_0\subseteq\mathcal{A}$ be the collection of reflecting hyperplanes of $W_0$, and let $X^0=\C^n \setminus \bigcup_{H\in\mathcal{A}_0} H$. Again we have the pure braid group $P_0=\pi_1(X^0,\tilde{x})$ and braid group $B_0=\pi_1(X^0/W_0,[\tilde{x}]_{W_0})$, and the projection $\pi_0:B_0\onto W_0$ with kernel $P_0$.

The inclusion of $X$ in $X^0$ induces a surjection $P\onto P_0$, whose kernel is the subgroup $K_0$ of $P$ generated by meridians around the hyperplanes in $\mathcal{A}\setminus\mathcal{A}_0$. As explained in~\cite[Section 2.2]{YH2}, we can identify $\pi_1(X/W_0,[\tilde{x}]_{W_0})$ with the subgroup $\pi^{-1}(W_0)$ of $B$, and thus the surjection $P\onto P_0$ extends to a surjection $\pi^{-1}(W_0)\onto B_0$ which still has kernel $K_0$. Hence $B_0$ can be identified with the subquotient $\pi^{-1}(W_0)/K_0$ of $B$, in such a way that $\pi:\pi^{-1}(W_0)\onto W_0$ factors through $\pi_0:B_0\onto W_0$. 

In the case when $W_0$ is a \emph{parabolic} subgroup of $W$, i.e.\ $W_0$ is the pointwise stabilizer in $W$ of some subspace of $\C^n$, it is shown in~\cite[Proposition 2.29]{BMR} that there is a splitting of the surjection $\pi^{-1}(W_0)\onto B_0$, well-defined up to conjugacy by $P$ and compatible with $\pi$ and $\pi_0$. Hence in this case we have a commutative diagram
\begin{equation}
\label{eq:parabolic}
\xymatrix{
 1 \ar[r] & P_0 \ar[r]\ar@{_{(}->}[d] & B_0 \ar[r]^{\pi_0}\ar@{_{(}->}[d] & W_0 \ar[r]\ar@{_{(}->}[d] & 1\\
 1 \ar[r] & P \ar[r] & B \ar[r]^{\pi} & W \ar[r] & 1
}
\end{equation} 
and we can regard $B_0$ as a subgroup of $B$ rather than a subquotient. However, for non-parabolic reflection subgroups $W_0$, the surjection $\pi^{-1}(W_0)\onto B_0$ is not split in general.

Let $N_W(W_0)$ denote the normalizer of $W_0$ in $W$, and define $\widehat{B}_0 =  \pi^{-1}(N_W(W_0)) \leq B$. Let $\sigma_H$ be a braided reflection around a hyperplane $H\in\mathcal{A}\setminus\mathcal{A}_0$ and let $\beta\in \widehat{B}_0$. Then $\sigma_H^{m_H}$ is a generator of $K_0$, and as $\pi(\beta)$ normalizes $W_0$ we have $\pi(\beta)(H)\notin \mathcal{A}_0$, hence $\beta \sigma_H^{m_H} \beta^{-1}\in K_0$. This shows that $K_0$ is still normal in $\widehat{B}_0$. The group $\widetilde{B}_0$ mentioned in the introduction is defined to be the quotient $\widehat{B}_0/K_0$. Note that $\widetilde{B}_0$ contains $B_0=\pi^{-1}(W_0)/K_0$ as a subgroup. 

Let $\widetilde{\pi}_0:\widetilde{B}_0\onto N_W(W_0)$ be the projection induced by $\pi$. Then we have a commutative diagram
\begin{equation}
\label{eq:comm-diag}
\xymatrix{
 1 \ar[r] & B_0 \ar[r]\ar[d]^{\pi_0} & \widetilde{B}_0 \ar[r]\ar[d]^{\widetilde{\pi}_0} & N_W(W_0)/W_0 \ar[r]\ar[d]^{=} & 1\\
 1 \ar[r] & W_0 \ar[r] & N_W(W_0) \ar[r] & N_W(W_0)/W_0 \ar[r] & 1}
\end{equation}
in which both rows are short exact sequences. These are the short exact sequences~\eqref{eq:ses-triv} and ~\eqref{eq:ses} mentioned in the introduction. It is trivial that any splitting of the top row would induce a splitting of the bottom row. In this paper, we consider cases where we have a splitting of the bottom row (equivalently, we have a subgroup of $N_W(W_0)$ complementary to $W_0$) and investigate whether it lifts to a splitting of the top row.

Recall from~\cite{BMR} the definition of the Hecke algebra $H_0$ associated to $W_0$, which is a quotient of the group algebra $\kk B_0$ by certain Hecke relations. Here $\kk$ can be taken to be the generic ring $\Z[a_{H,i},a_{H,0}^{\pm 1}]$ where $a_{H,i}$ are indeterminates indexed by $W_0$-orbits of hyperplanes $H\in\mathcal{A}_0$ and integers $0\leq i<m_H$.

Recall from \cite{YH2} that the Hecke algebra $\widetilde{H}_0$ associated to $N_W(W_0)$ is defined as the quotient of $\kk \widetilde{B}_0$ by the same Hecke relations as in the definition of $H_0$. If the short exact sequence~\eqref{eq:ses} splits, then $\kk \widetilde{B}_0$ is a semidirect product of $\kk B_0$ with the group $N_W(W_0)/W_0$, and consequently $\widetilde{H}_0$ is a semidirect product of $H_0$ with the group $N_W(W_0)/W_0$.

As a general notational convention, on those occasions when we need to consider reflection subgroups of $W$ other than our fixed $W_0$, we denote them as $W_1$ or $W_2$, etc. Our notation for the objects associated to $W_i$ is then obtained by replacing $0$ by $i$ in the notation for the analogous objects for $W_0$.


\section{Reflection subgroups of Coxeter groups}
\label{sec:coxeter}

Our aim in this section is to prove that the short exact sequence~\eqref{eq:ses} does split in the case when $W$ is a finite Coxeter group, that is, the complexification of a real reflection group, and $W_0$ is an arbitrary reflection subgroup. Our argument applies equally well to infinite Coxeter groups, as long as we make the assumption stated as~\eqref{eq:assumption} below. For most of this section, we let $(W,S)$ be an arbitrary Coxeter system with $W$ and $S$ possibly infinite. From Section~\ref{ss:topology} onwards we re-impose the assumption that $W$ is finite (except for Remarks~\ref{rem:artin-hom} and~\ref{rem:parabolic-injectivity}).

In the setting of an arbitrary Coxeter system, we use the letter $B$ to denote the Artin group of $(W,S)$ (which is consistent with our previous usage if $W$ is finite; see Section~\ref{ss:topology}). Let $\Sigma$ be the standard set of generators of $B$ which is in canonical bijection with the (possibly infinite) set $S$ of generators of $W$. As a notational convention, if $s$ or $s_i$ or $s_{i_1}$, for example, denotes an element of $S$, we write the corresponding element of $\Sigma$ as $\sigma$ or $\sigma_i$ or $\sigma_{i_1}$, respectively. By definition of $B$, we have a projection homomorphism $\pi:B\onto W$ which extends and is uniquely determined by the bijection from $\Sigma$ to $S$. Recall that the pure Artin group $P:=\ker(B\onto W)$ is generated by elements of the form $\beta \sigma^2\beta^{-1}$ where $\beta\in B$ and $\sigma\in\Sigma$. The projection $\pi$ has a (non-homomorphic) section $W\to B:w\mapsto\underline{w}$ where $\underline{w}$ is the positive lift of $w$: explicitly, if $w=s_1s_2\cdots s_k$ is a reduced expression, then $\underline{w}=\sigma_1\sigma_2\cdots\sigma_k$.

\subsection{Reflection subgroups and normalizers}

We refer the reader to \cite{DYER_SUBGROUPS} for those results stated in this subsection for which no specific reference is given. 

We denote by $T$ the set $\bigcup_{w\in W} w S w^{-1}$ of reflections of $W$. We define the (left) inversion set of $w\in W$ as $$N(w):=\{ t\in T; \ell(tw)<\ell(w)\},$$ where $\ell$ is the usual length function relative to the simple system $S$.
Given any reduced expression $s_1 s_2\cdots s_k$ for $w\in W$, we have $$N(w)=\{ s_1, s_1 s_2 s_1, \dots, s_1 s_2\cdots s_{k-1} s_k s_{k-1} \cdots s_2 s_1\},$$ where the elements listed on the right-hand side are distinct. In particular, we have $|N(w)|=\ell(w)$. 

We have defined $N(w)$ in terms of left inversions, following~\cite{DYER_SUBGROUPS}, which means that the right inversion set of $w$ is
\begin{equation} \label{eq:right-inversions}
N(w^{-1})=\{ t\in T; \ell(wt)<\ell(w)\}=w^{-1}N(w)w.
\end{equation}
For any $w_1,w_2\in W$ we have the cocycle rule  
\begin{equation} \label{eq:cocycle}
N(w_1w_2)=N(w_1)+w_1N(w_2)w_1^{-1},
\end{equation} 
where on the right-hand side $+$ means symmetric difference. So $\ell(w_1 w_2)=\ell(w_1)+\ell(w_2)$ if and only if $N(w_1)\cap w_1N(w_2)w_1^{-1}=\emptyset$, which is equivalent to $N(w_1^{-1})\cap N(w_2)=\emptyset$.

Let $W_0$ be a reflection subgroup of $W$, that is, a subgroup generated by a (possibly infinite) subset of $T$.
\begin{lemma}\label{lem:dyer} \textup{(A special case of \cite[Theorem 3.3]{DYER_SUBGROUPS}.)} 
The reflection subgroup $W_0$ is a Coxeter group in a canonical way, with \textup{(}possibly infinite\textup{)} Coxeter generating set given by $$S_0=\{t\in T; N(t)\cap W_0=\{t\}\}.$$ Relative to this Coxeter structure, the set of reflections of $W_0$ is $T\cap W_0$
and the inversion set of $w\in W_0$ is $N(w)\cap W_0$.  
\end{lemma}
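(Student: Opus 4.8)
The plan is to realize everything through the restriction to $W_0$ of the reflection cocycle $w\mapsto N(w)$, and to show that this restricted cocycle is the inversion cocycle of an honest Coxeter system on $W_0$. Fix the Tits (geometric) representation of $(W,S)$ with root system $\Phi=\Phi^+\sqcup\Phi^-$, so that reflections $t\in T$ correspond to positive roots $\gamma$ via $t=s_\gamma$, and $N(w)=\{s_\gamma:\gamma\in\Phi^+,\ w^{-1}\gamma\in\Phi^-\}$. Writing $N_0(w):=N(w)\cap W_0$, I would first record the two elementary facts that drive everything: that $t\in N_0(t)$ for every $t\in T\cap W_0$ (since $\ell(t\cdot t)=0<\ell(t)$), so that $S_0=\{t\in T\cap W_0:N_0(t)=\{t\}\}$ is exactly the set of reflections of $W_0$ that are their own unique $W_0$-inversion; and that $N_0$ inherits the cocycle rule \eqref{eq:cocycle}, namely $N_0(w_1w_2)=N_0(w_1)+w_1N_0(w_2)w_1^{-1}$ for $w_1,w_2\in W_0$, because conjugation by $w_1\in W_0$ preserves $W_0$ and hence commutes with intersecting against $W_0$. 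This reframes the whole statement in terms of the restricted cocycle $N_0$.

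The heart of the argument is to show that $S_0$ generates $W_0$ and that $N_0$ is exactly the $S_0$-inversion cocycle. Let $\Phi_0=\{\gamma\in\Phi:s_\gamma\in W_0\}$ and $\Phi_0^\pm=\Phi_0\cap\Phi^\pm$; then $W_0$ permutes $\Phi_0$ and acts faithfully. Two lemmas then drive an induction: first, that $N_0(w)\neq\emptyset$ whenever $w\in W_0\setminus\{1\}$, equivalently that no nontrivial element of $W_0$ fixes $\Phi_0^+$ setwise, so that some $\gamma\in\Phi_0^+$ satisfies $w^{-1}\gamma\in\Phi_0^-$; and second, the decisive lemma that any $t\in N_0(w)$ for which $N_0(t)$ is minimal (by cardinality, or by depth in the infinite case) necessarily lies in $S_0$. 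Granting these, pick such an $s\in S_0\cap N_0(w)$; the cocycle rule gives $|N_0(sw)|=|N_0(w)|-1$ with $\ell(sw)<\ell(w)$, and induction on $|N_0(w)|$ shows that $w$ is a product of $|N_0(w)|$ elements of $S_0$. This simultaneously yields $\langle S_0\rangle=W_0$, the length formula $\ell_{S_0}(w)=|N_0(w)|$, and the identification of $N_0(w)$ with the set of reflections of $W_0$ inverted by $w$.

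Finally I would conclude that $(W_0,S_0)$ is a Coxeter system. With $\ell_{S_0}=|N_0(\cdot)|$ behaving additively under left multiplication by $S_0$ exactly as dictated by the cocycle, the strong exchange (equivalently, deletion) condition for $(W_0,S_0)$ follows, which characterizes Coxeter systems; the generating set $S_0$ is canonical precisely because it was defined intrinsically from $N$. The set of reflections of this system is $\bigcup_{w\in W_0}wS_0w^{-1}$: each such element is a $W$-conjugate of a reflection lying in $W_0$, hence lies in $T\cap W_0$, and conversely every $t=s_\gamma$ with $\gamma\in\Phi_0^+$ is $W_0$-conjugate to an element of $S_0$ (every positive root of the subsystem $\Phi_0$ is conjugate to a simple one), so the reflections are precisely $T\cap W_0$. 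The inversion-set claim is then literally the identity $N_{S_0}(w)=N_0(w)=N(w)\cap W_0$ established in the previous step. The main obstacle is the decisive lemma of the middle paragraph — that a minimal element of $N_0(w)$ is $S_0$-simple and that the restricted cocycle forces the exchange condition. This is the genuinely Coxeter-combinatorial content (it is essentially the substance of \cite[Theorem 3.3]{DYER_SUBGROUPS}), and the delicate point is the rank-two analysis inside the dihedral subsystem spanned by two inversions, together with the need, when $W$ and $S$ are infinite, to replace finiteness and height arguments by Brink--Howlett-style depth arguments.
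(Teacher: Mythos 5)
The paper offers no proof of this lemma at all: it is quoted verbatim as a special case of Dyer's Theorem~3.3 in \cite{DYER_SUBGROUPS}, so the only thing to compare your attempt against is that citation. Judged as a self-contained argument, your proposal has a genuine gap, and you name it yourself: the two lemmas that drive your induction --- (i) that $N_0(w):=N(w)\cap W_0$ is non-empty for every $w\in W_0\setminus\{1\}$, and (ii) the ``decisive lemma'' that any $t\in N_0(w)$ with $N_0(t)$ minimal must lie in $S_0$ --- are stated but never proved, and the same is true of the final step, where you assert that additivity of $|N_0(\cdot)|$ forces the exchange condition for $(W_0,S_0)$. These deferred claims are not routine bookkeeping; they are precisely the mathematical content of Dyer's theorem, whose proof runs through a characterization of Coxeter systems by abstract reflection cocycles (what your last paragraph gestures at) together with a rank-two analysis inside dihedral reflection subgroups. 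What you do establish --- the restriction of the cocycle rule~\eqref{eq:cocycle} to $W_0$, the identity $|N_0(sw)|=|N_0(w)|-1$ for $s\in S_0\cap N_0(w)$, and the derivation of generation, the length formula $\ell_{S_0}(w)=|N_0(w)|$, the reflection set $T\cap W_0$, and the inversion sets, all \emph{granting} the two lemmas --- is correct, but it is the easy outer shell of the argument.

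Beware in particular that your lemma (i) is not as innocuous as its phrasing suggests. The statement that no nontrivial element of $W_0$ preserves $\Phi_0^+$ setwise is, for a general (possibly infinite, possibly non-parabolic) reflection subgroup, essentially equivalent to knowing that $\Phi_0$ admits a simple system behaving like that of a Coxeter root system --- which is part of what is being proved, so the obvious route to (i) is circular. Dyer's actual proof orders the steps so as to avoid this, which is one reason the theorem is not a formality. Since the paper treats Lemma~\ref{lem:dyer} as a black box from the literature, a blind attempt either has to reproduce Dyer's core lemmas in full or, like the paper, simply cite them; your proposal does neither, instead reducing the statement to those lemmas and stopping there.
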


\begin{lemma}\label{lem:dyer2} \textup{(See~\cite[Corollary 3.4(ii)]{DYER_SUBGROUPS}.)} 
For $w\in W$, the following conditions are equivalent:
\begin{enumerate}
\item $w$ has minimal length in its coset $W_0 w$;
\item $N(w)\cap W_0=\emptyset$.
\end{enumerate}
Moreover, in any coset $W_0 x\subseteq W$ there is a unique element which satisfies these conditions.
\end{lemma}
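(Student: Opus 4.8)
The plan is to prove the two equivalent conditions together with the existence and uniqueness in the ``moreover'', using the cocycle rule~\eqref{eq:cocycle} \emph{restricted to} $W_0$ as the main engine, with Lemma~\ref{lem:dyer} supplying the interpretation of $N(\cdot)\cap W_0$.

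First I would dispose of the easy implication $(1)\Rightarrow(2)$ and of existence. If $w$ has minimal length in $W_0w$, then for every reflection $t\in T\cap W_0$ of $W_0$ the element $tw$ again lies in $W_0w$, so $\ell(tw)\geq\ell(w)$, i.e.\ $t\notin N(w)$. Since $N(w)\subseteq T$ and, by Lemma~\ref{lem:dyer}, $T\cap W_0$ is exactly the reflection set of $W_0$, this gives $N(w)\cap W_0=\emptyset$. Existence of a minimal-length element in each coset is immediate, because $\{\ell(x):x\in W_0x\}$ is a nonempty set of non-negative integers.

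The heart of the argument is a uniqueness statement: at most one element of each coset satisfies $(2)$. For $u\in W_0$, I would intersect the cocycle rule $N(uw)=N(u)+uN(w)u^{-1}$ with $W_0$, using that intersection distributes over symmetric difference together with $uW_0u^{-1}=W_0$, to obtain
\[
N(uw)\cap W_0=\bigl(N(u)\cap W_0\bigr)+u\bigl(N(w)\cap W_0\bigr)u^{-1}.
\]
By Lemma~\ref{lem:dyer}, $N(u)\cap W_0$ is the inversion set of $u$ computed inside the Coxeter group $(W_0,S_0)$, so it is empty precisely when $u=e$. Consequently, if $w$ satisfies $N(w)\cap W_0=\emptyset$ and $w'=uw$ with $u\in W_0$ also satisfies $N(w')\cap W_0=\emptyset$, the displayed identity forces $N(u)\cap W_0=\emptyset$, hence $u=e$ and $w'=w$.

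Combining these yields everything at once: a minimal-length representative exists and, by $(1)\Rightarrow(2)$, satisfies $(2)$; the uniqueness statement shows it is the only element of its coset satisfying $(2)$. Hence any $w$ with $N(w)\cap W_0=\emptyset$ coincides with that representative and is therefore of minimal length, giving $(2)\Rightarrow(1)$ and simultaneously the uniqueness asserted in the ``moreover''. The main obstacle to anticipate is the natural but flawed attempt to prove $(2)\Rightarrow(1)$ directly by showing that lengths add, $\ell(uw)=\ell(u)+\ell(w)$; this fails because $N(u)$ need not be contained in $W_0$ when $W_0$ is not parabolic, so the full inversion sets can overlap. The remedy is precisely to work with $N(\cdot)\cap W_0$ rather than $N(\cdot)$, since the former obeys the clean $W_0$-cocycle above by Lemma~\ref{lem:dyer}.
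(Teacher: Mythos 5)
Your proof is correct and follows essentially the same route as the paper's: the easy implication $(1)\Rightarrow(2)$ plus existence of minimal-length representatives, then uniqueness of elements satisfying $(2)$ in a coset via the cocycle rule~\eqref{eq:cocycle} intersected with $W_0$ and the fact (from Lemma~\ref{lem:dyer}) that the identity is the only element of $W_0$ with empty $W_0$-inversion set. The paper leaves the cocycle deduction as "easy"; you have simply written out that step explicitly, and correctly.
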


\begin{proof}
It is clear that (1) implies (2), and that in any coset $W_0 x$ there is at least one element satisfying (1). From Lemma~\ref{lem:dyer} we know that the identity is the only element of $W_0$ satisfying (2), and using~\eqref{eq:cocycle} it is easy to deduce that in any coset $W_0 x$ there is at most one element satisfying (2). The result follows.
\end{proof}

We are interested in the normalizer $N_W(W_0)$ of $W_0$ in $W$. Define $$U_0:=\{w\in N_W(W_0); N(w)\cap W_0=\emptyset\}.$$ From~\eqref{eq:right-inversions} we see that this definition would be unchanged if we used right inversions:
$$U_0=\{w\in N_W(W_0); N(w^{-1})\cap W_0=\emptyset\}.$$

\begin{lemma} \label{lem:semi-direct} \textup{(See~\cite[Lemma 2 and Corollary 3]{HOWLETT}.)}
$U_{0}$ is a subgroup of $N_W(W_0)$ which is complementary to $W_0$. Thus we have a semidirect product decomposition $$N_W(W_0)= W_0 \rtimes U_{0}.$$ The conjugation action of $U_0$ on $W_0$ preserves the Coxeter generating set $S_0$.
\end{lemma}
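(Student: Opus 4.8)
The plan is to extract everything from the cocycle rule~\eqref{eq:cocycle} together with the characterizations of inversion sets in Lemmas~\ref{lem:dyer} and~\ref{lem:dyer2}, exploiting repeatedly that conjugation by an element of $N_W(W_0)$ stabilizes $W_0$ setwise. I would prove the four assertions in turn: (i) $U_0$ is a subgroup; (ii) $U_0\cap W_0=\{1\}$; (iii) $W_0 U_0=N_W(W_0)$; and (iv) conjugation by $U_0$ preserves $S_0$. The single observation that makes all the computations go through is the following: for any $w\in N_W(W_0)$ and any subset $E\subseteq T$, an element $wtw^{-1}$ with $t\in E$ lies in $W_0$ if and only if $t\in w^{-1}W_0 w=W_0$, so that $wEw^{-1}\cap W_0=w(E\cap W_0)w^{-1}$. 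Combined with the fact that intersecting with the fixed set $W_0$ distributes over symmetric difference, this turns the cocycle rule into a tool for computing $N(\cdot)\cap W_0$.

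For (i), the identity lies in $U_0$ since $N(1)=\emptyset$. The key point, and the reason the statement is not automatic (minimal-length representatives for $W_0\backslash W$ do \emph{not} form a subgroup in general), is closure under multiplication, where the normalizer hypothesis is essential: given $u,v\in U_0$ I would apply~\eqref{eq:cocycle} to get $N(uv)=N(u)+uN(v)u^{-1}$, intersect with $W_0$, and use the observation above, so that $N(u)\cap W_0=\emptyset$ by hypothesis while $uN(v)u^{-1}\cap W_0=u(N(v)\cap W_0)u^{-1}=\emptyset$; hence $N(uv)\cap W_0=\emptyset$ and $uv\in U_0$ (noting $uv\in N_W(W_0)$ as both factors are). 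For inverses I would invoke the right-inversion description of $U_0$ recorded just before the lemma: using~\eqref{eq:right-inversions} one sees $N(u^{-1})\cap W_0=u^{-1}(N(u)\cap W_0)u=\emptyset$, so $u^{-1}\in U_0$.

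Assertion (ii) is immediate: if $w\in U_0\cap W_0$, then by Lemma~\ref{lem:dyer} the inversion set of $w$ inside $W_0$ is $N(w)\cap W_0=\emptyset$, forcing $w=1$. For (iii), given $g\in N_W(W_0)$, Lemma~\ref{lem:dyer2} provides the unique minimal-length representative $u$ of the coset $W_0 g$, characterized by $N(u)\cap W_0=\emptyset$; writing $g=wu$ with $w\in W_0$ and noting $u=w^{-1}g\in N_W(W_0)$, we get $u\in U_0$ and $g\in W_0 U_0$. Since $W_0$ is normal in $N_W(W_0)$ by definition of the normalizer, (i)--(iii) together yield the asserted semidirect product decomposition.

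The main work is (iv). Here I would use that in the Coxeter system $(W_0,S_0)$ the set $S_0$ consists precisely of the elements of $W_0$-length one, and that by Lemma~\ref{lem:dyer} this length equals $|N(\cdot)\cap W_0|$; so it suffices to show that conjugation by any $u\in U_0$ preserves the quantity $|N(w)\cap W_0|$ for $w\in W_0$. Expanding by two applications of~\eqref{eq:cocycle} gives
\[
N(uwu^{-1})=N(u)+uN(w)u^{-1}+uwN(u^{-1})w^{-1}u^{-1},
\]
and intersecting with $W_0$ (again distributing over $+$) kills the first and third terms, since $N(u)\cap W_0=N(u^{-1})\cap W_0=\emptyset$ and $uw\in N_W(W_0)$, leaving
\[
N(uwu^{-1})\cap W_0=u\bigl(N(w)\cap W_0\bigr)u^{-1}.
\]
This has the same cardinality as $N(w)\cap W_0$, so $u$-conjugation is length-preserving on $W_0$ and therefore sends $S_0$ to $S_0$. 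I do not anticipate a genuine obstacle at any stage: the only delicate point throughout is the bookkeeping with symmetric differences, but the normalizer hypothesis neutralizes every conjugation against $W_0$, which is exactly what both the subgroup property and the $S_0$-invariance require.
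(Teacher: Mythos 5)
Your proof is correct and follows essentially the same route as the paper's: the cocycle rule~\eqref{eq:cocycle} for the subgroup property, Lemma~\ref{lem:dyer2} for complementarity, and the same three-term expansion of $N(uwu^{-1})$ (intersected with $W_0$, using that conjugation by elements of $N_W(W_0)$ commutes with intersecting against $W_0$) for the last assertion. The only cosmetic difference is in how you finish: you conclude $S_0$-invariance by noting that conjugation preserves $|N(\cdot)\cap W_0|=\ell_0$ and that $S_0$ is the set of length-one elements, whereas the paper specializes the identical computation to $t\in S_0$ and reads off $N(utu^{-1})\cap W_0=\{utu^{-1}\}$ directly from the characterization $S_0=\{t\in T;\,N(t)\cap W_0=\{t\}\}$ in Lemma~\ref{lem:dyer}.
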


\begin{proof}
That $U_0$ is a subgroup of $N_W(W_0)$ follows easily from~\eqref{eq:cocycle}, and Lemma~\ref{lem:dyer2} implies that it is complementary to $W_0$. Finally, if $u\in U_0$ and $t\in S_0$, then 
\begin{equation*}
\begin{split}
N(utu^{-1})\cap W_0&=(N(u)+uN(t)u^{-1}+utN(u^{-1})tu^{-1})\cap W_0\\
&=(N(u)\cap W_0)+u(N(t)\cap W_0)u^{-1}+ut(N(u^{-1})\cap W_0)tu^{-1}\\
&=\emptyset+\{utu^{-1}\}+\emptyset=\{utu^{-1}\},
\end{split}
\end{equation*}
so $utu^{-1}\in S_0$ as required.
\end{proof}

\begin{example} \label{ex:easy}
Let $(W,S)$ be of type $G_2$, with $S:=\{s_1,s_2\}$. That is, $$W=\langle s_1,s_2\;|\;s_1^2=s_2^2=1, s_1s_2s_1s_2s_1s_2=s_2s_1s_2s_1s_2s_1\rangle.$$ Let $W_0<W$ be the subgroup of type $A_2$ generated by the reflections $s_1$ and $s_2s_1s_2$. Then it is easy to see that $S_0=\{s_1,s_2s_1s_2\}$ and $U_0=\{1,s_2\}$. In this case $W_0$ is normal in $W$, and the semidirect product decomposition of Lemma~\ref{lem:semi-direct} is $W=W_0\rtimes U_0$. We will return to this simple example later in the section.
\end{example}

\begin{remark} \label{rem:roots}
An alternative interpretation of the complementary subgroup $U_0$ is in terms of roots. We will not use this point of view in any proofs, but we describe it briefly for use in examples and remarks. Form a geometric representation $V$ of $(W,S)$ as in~\cite[Section 4]{DYER_SUBGROUPS}, and let $\Pi=\{\alpha_s\,;\, s\in S\}$ denote the given basis of $V$ in canonical bijection with $S$. Let $\Phi=W\Pi$ be the set of all roots, and $\Phi^+=\{\alpha_t\,;\, t\in T\}$ the set of positive roots in canonical bijection with $T$. Then $\Phi$ is the disjoint union of $\Phi^+$ and $\Phi^-:=-\Phi^+$, and for $w\in W$ we have $$N(w)=\{t\in T\,;\, w^{-1}(\alpha_t)\in\Phi^-\}$$ as in~\cite[Lemma 4.3]{DYER_SUBGROUPS}. 

The reflection subgroup $W_0$ gives rise to the subsets $\Phi_0^+:=\{\alpha_t\,;\,t\in T\cap W_0\}$ and $\Pi_0:=\{\alpha_t\,;\,t\in S_0\}$ of $\Phi^+$ and the subset $\Phi_0^-:=-\Phi_0^+$ of $\Phi^-$. As a consequence of~\cite[Theorem 4.4]{DYER_SUBGROUPS}, $\Phi_0:=W_0\Pi_0$ is the disjoint union of $\Phi_0^+$ and $\Phi_0^-$, and every element of $\Phi_0^+$ can be written as a positive linear combination of elements of $\Pi_0$. 

For any $w\in W$, we have $N(w^{-1})\cap W_0=\emptyset$ if and only if $w(\Phi_0^+)\subseteq\Phi^+$, which by the last remark is equivalent to $w(\Pi_0)\subseteq\Phi^+$. Combining this with the fact that the conjugation action of $U_0$ preserves $S_0$, we see that $$U_0=\{w\in W\,;\,w(\Phi_0^+)=\Phi_0^+\}=\{w\in W\,;\,w(\Pi_0)=\Pi_0\}.$$
\end{remark}

\begin{example}\label{ex:d_4}
Let $(W,S)$ be of type $D_4$, with $S:=\{s_1, s_2, s_3, s_4\}$, where $s_1,s_2,s_4$ are the simple reflections which commute with each other. The corresponding roots $\alpha_1,\alpha_2,\alpha_4\in\Pi$ are perpendicular to each other for the unique $W$-invariant inner product on $V$, and they are also perpendicular to the highest root $\alpha_1 + \alpha_2 +2\alpha_3+\alpha_4=\alpha_t$ where $t=s_3s_1s_2s_4s_3s_4s_2s_1s_3$. Let $W_0<W$ be the subgroup of type $4A_1$ generated by $s_1,s_2,s_4,t$. Then $S_0=\{s_1, s_2, s_4, t\}$ and $\Pi_0=\{\alpha_1,\alpha_2,\alpha_4,\alpha_t\}$. In this case $U_0=\{w\in W\,;\,w(\Pi_0)=\Pi_0\}=\{ 1, s_3s_1s_2s_3, s_3s_2s_4s_3, s_3s_1s_4s_3\}$ is isomorphic to the Klein 4-group, with its three non-identity elements acting on $\Pi_0$ as the three fixed-point-free involutions.
\end{example}

\begin{remark} \label{rem:conjugacy}
If $W_1$ is another reflection subgroup of $W$ which is conjugate to $W_0$, then by Lemma~\ref{lem:dyer2} we can find $\tilde{w}\in W$ such that $W_0=\tilde{w}W_1\tilde{w}^{-1}$ and $N(\tilde{w})\cap W_0=\emptyset$. A calculation which is very similar to that in the proof of Lemma~\ref{lem:semi-direct} shows that $S_0=\tilde{w}S_1\tilde{w}^{-1}$ and $U_0=\tilde{w}U_1\tilde{w}^{-1}$. This observation is particularly useful when $W_0$ is a parabolic subgroup of $W$, i.e.\  a conjugate of a standard parabolic subgroup $W_1=\langle S_1\rangle $ for some subset $S_1\subseteq S$. (Our use of the notation $S_1$ is consistent, because it does equal the canonical Coxeter generating set of $W_1$.) The complementary subgroup $U_1$ for such a standard parabolic subgroup $W_1 \leq W$ was described by Howlett~\cite{HOWLETT} in the case when $W$ is finite and by Brink--Howlett~\cite{BRINKHOWLETT} in general.
\end{remark}

\subsection{Reducing non-reduced expressions}

The well-known Deletion Condition states that if $(s_1,\dots,s_k)$ is a sequence of elements of $S$ such that $s_1\cdots s_k$ is non-reduced, meaning that $\ell(w)<k$ where $w=s_1\cdots s_k$, then there exist $a_1,b_1\in\{1,\dots,k\}$ with $a_1<b_1$ such that $$w=s_1\cdots\widehat{s_{a_1}}\cdots\widehat{s_{b_1}}\cdots s_k.$$ Moreover, one can in fact stipulate that $a_1\in\{1,\dots,k\}$ is maximal such that $s_{a_1}\cdots s_k$ is non-reduced; we make this choice of $a_1$ henceforth, and it determines a unique choice of $b_1$. (In fact, $b_1$ is the smallest integer greater than $a_1$ such that $s_{a_1}\cdots s_{b_1}$ is non-reduced.) Note that $s_{a_1+1}\cdots s_{b_1}$ and $s_{a_1}\cdots s_{b_1-1}$ are reduced expressions for the same element. Moreover, $s_{a_1+1}\cdots \widehat{s_{b_1}}\cdots s_k$ is reduced, because $$\ell(s_{a_1+1}\cdots \widehat{s_{b_1}}\cdots s_k)=\ell(s_{a_1}\cdots s_k)=k-a_1-1$$ since $s_{a_1+1}\cdots s_k$ is reduced by the choice of $a_1$.

Now if $s_1\cdots\widehat{s_{a_1}}\cdots\widehat{s_{b_1}}\cdots s_k$ is still not reduced, we can define $a_2,b_2\in\{1,\dots,k-2\}$ in the same way, and we have $a_2<a_1$ by the last remark. 

In this way, starting with a sequence $(s_1,\dots,s_k)$ of elements of $S$, we define a sequence of pairs 
\[ (a_1,b_1),(a_2,b_2),\dots,(a_r,b_r)\ \text{with}\ a_1>a_2>\dots>a_r. \] 
After making all the successive deletions of two terms of the sequence indicated by these pairs, one obtains a reduced expression for $w=s_1\cdots s_k$, so that $\ell(w)=k-2r$. (If the original expression $s_1\cdots s_k$ is already reduced, then $r=0$ and the sequence of pairs is empty.)

The relevance of this construction for us lies in the following computation in $B$. 

\begin{lemma} \label{lem:braid-decomposition}
Let $(s_1,\dots,s_k)$ be any sequence of elements of $S$, and define $(a_1,b_1),\cdots,(a_r,b_r)$ as above. If $\sigma_i$ denotes the generator of $B$ corresponding to $s_i$, and $\underline{w}\in B$ denotes the positive lift of $w=s_1\cdots s_k$, then we have the following equality in $B$:
\[
\begin{split}
\sigma_1\cdots\sigma_k
&=(\sigma_1\cdots\sigma_{a_1-1}\sigma_{a_1}^2\sigma_{a_1-1}^{-1}\cdots \sigma_1^{-1})(\sigma_1\cdots\sigma_{a_2-1}\sigma_{a_2}^2\sigma_{a_2-1}^{-1}\cdots \sigma_1^{-1})\\
&\qquad\qquad\cdots(\sigma_1\cdots\sigma_{a_r-1}\sigma_{a_r}^2\sigma_{a_r-1}^{-1}\cdots \sigma_1^{-1})\,\underline{w}.
\end{split}
\]
\end{lemma}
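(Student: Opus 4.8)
The plan is to prove the identity by induction on the number $r$ of deletion pairs. The base case $r=0$ is exactly the definition of the positive lift: if $s_1\cdots s_k$ is reduced then $\underline{w}=\sigma_1\cdots\sigma_k$, and the product of conjugated squares is empty, so both sides agree trivially. For the inductive step, I would isolate the effect of the first (innermost, largest-index) deletion pair $(a_1,b_1)$, peel off the corresponding factor $\sigma_1\cdots\sigma_{a_1-1}\sigma_{a_1}^2\sigma_{a_1-1}^{-1}\cdots\sigma_1^{-1}$, and reduce to the sequence with $s_{a_1}$ and $s_{b_1}$ deleted, which by construction has exactly $r-1$ deletion pairs $(a_2,b_2),\dots,(a_r,b_r)$.

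The key computation is local and happens between positions $a_1$ and $b_1$. By the choice of $a_1$, the words $s_{a_1+1}\cdots s_{b_1}$ and $s_{a_1}\cdots s_{b_1-1}$ are reduced expressions for the same element of $W$, so their positive lifts coincide in $B$: $\sigma_{a_1+1}\cdots\sigma_{b_1}=\sigma_{a_1}\cdots\sigma_{b_1-1}$. The heart of the argument is to establish, in $B$, the identity
\begin{equation*}
\sigma_{a_1}\sigma_{a_1+1}\cdots\sigma_{b_1}
=(\sigma_{a_1}\cdots\sigma_{b_1-1})\,\sigma_{a_1}^2\,\sigma_{a_1}^{-1}(\sigma_{a_1}\cdots\sigma_{b_1-1})^{-1}\cdot(\sigma_{a_1+1}\cdots\widehat{\sigma_{b_1}}\cdots),
\end{equation*}
or more cleanly: using $\sigma_{a_1+1}\cdots\sigma_{b_1}=\sigma_{a_1}\cdots\sigma_{b_1-1}$ to rewrite $\sigma_{a_1}\cdots\sigma_{b_1}=\sigma_{a_1}(\sigma_{a_1}\cdots\sigma_{b_1-1})=\sigma_{a_1}^2(\sigma_{a_1}^{-1}\sigma_{a_1}\cdots\sigma_{b_1-1})$, one extracts a single conjugate of $\sigma_{a_1}^2$ and is left with the braid word corresponding to $s_{a_1+1}\cdots\widehat{s_{b_1}}\cdots$, i.e.\ the letters at positions $a_1+1,\dots,b_1-1$. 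The conjugating element that lands on $\sigma_{a_1}^2$ is precisely $\sigma_1\cdots\sigma_{a_1-1}$, matching the stated factor $\sigma_1\cdots\sigma_{a_1-1}\sigma_{a_1}^2\sigma_{a_1-1}^{-1}\cdots\sigma_1^{-1}$ once the prefix $\sigma_1\cdots\sigma_{a_1-1}$ is reinstated.

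After peeling off this first factor, the remaining braid word is $\sigma_1\cdots\sigma_{a_1-1}\cdot(\sigma_{a_1+1}\cdots\widehat{\sigma_{b_1}}\cdots\sigma_k)$, which is the generator-sequence for the once-deleted word; by the inductive hypothesis applied to it, this equals the product of the remaining conjugated squares $(\sigma_1\cdots\sigma_{a_j-1}\sigma_{a_j}^2\sigma_{a_j-1}^{-1}\cdots\sigma_1^{-1})$ for $j=2,\dots,r$ times the positive lift $\underline{w}$. The one subtlety to check carefully is that the indices $a_2,\dots,a_r$ and the prefixes $\sigma_1\cdots\sigma_{a_j-1}$ retain their original meaning after the deletion; this is exactly guaranteed by the inequalities $a_r<\dots<a_2<a_1$, since all deleted positions lie strictly above $a_2$, so the prefixes up to $a_j-1$ are untouched.

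The main obstacle I anticipate is bookkeeping rather than conceptual: making the index shifts after deletion precise, and verifying that the positive lift of the once-deleted word agrees with $\underline{w}$ (which holds because deletion does not change the underlying element $w$, only its expression). The genuinely nontrivial algebraic input is the local relation $\sigma_{a_1+1}\cdots\sigma_{b_1}=\sigma_{a_1}\cdots\sigma_{b_1-1}$, which is valid in $B$ precisely because both sides are positive lifts of a common reduced expression and Matsumoto's theorem (equivalently, the defining braid relations of $B$) ensures positive lifts of reduced expressions are well-defined independent of the chosen reduced word.
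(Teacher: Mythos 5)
Your proposal is correct and matches the paper's own proof: both argue by induction on $r$, peeling off the factor $\sigma_1\cdots\sigma_{a_1-1}\sigma_{a_1}^2\sigma_{a_1-1}^{-1}\cdots\sigma_1^{-1}$ via the key identity $\sigma_{a_1+1}\cdots\sigma_{b_1}=\sigma_{a_1}\cdots\sigma_{b_1-1}$, which holds because these are positive lifts of two reduced expressions of the same element of $W$. The bookkeeping point you flag (that $a_2,\dots,a_r$ and their prefixes are unaffected by the deletion, thanks to $a_r<\dots<a_2<a_1$) is exactly what the paper's setup builds in by defining the pairs recursively on the deleted sequences, so nothing further is needed.
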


\begin{proof}
Arguing by induction on $r$, we need only prove that if $r\ge 1$, then
$$
\sigma_1\cdots\sigma_k=(\sigma_1\cdots\sigma_{a_1-1}\sigma_{a_1}^2\sigma_{a_1-1}^{-1}\cdots \sigma_1^{-1})
\sigma_1\cdots\widehat{\sigma_{a_1}}\cdots\widehat{\sigma_{b_1}}\cdots \sigma_k.
$$
This follows immediately from the equality $\sigma_{a_1+1}\cdots \sigma_{b_1}=\sigma_{a_1}\cdots \sigma_{b_1-1}$, which holds since $s_{a_1+1}\cdots s_{b_1}=s_{a_1}\cdots s_{b_1-1}$ are reduced expressions of the same element of $W$. 
\end{proof}

\begin{lemma} \label{lem:concatenation}
Let $(s_1,\dots,s_k)$ be a sequence of elements of $S$ obtained by concatenating reduced expressions for two elements of $W$, namely $w_1=s_1\cdots s_p$ and $w_2=s_{p+1}\cdots s_k$. Define $(a_1,b_1),\dots,(a_r,b_r)$ as above. Then $$N(w_1)\cap w_1N(w_2)w_1^{-1}=\{s_1s_2\cdots s_{a_1-1}s_{a_1}s_{a_1-1}\cdots s_2s_1, \dots, s_1s_2\cdots s_{a_r-1}s_{a_r}s_{a_r-1}\cdots s_2s_1\},$$ where the elements listed on the right-hand side are distinct.
\end{lemma}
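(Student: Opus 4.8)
The plan is to pin down the intersection $N(w_1)\cap w_1N(w_2)w_1^{-1}$ inside the common frame provided by the reflections
\[
c_i:=s_1s_2\cdots s_{i-1}s_is_{i-1}\cdots s_2s_1,\qquad 1\le i\le k,
\]
and then to match the resulting set against the deletion data. First I would apply the reduced-expression description of inversion sets recalled above to $w_1=s_1\cdots s_p$, which gives $N(w_1)=\{c_1,\dots,c_p\}$ with the $c_i$ pairwise distinct; applying it to $w_2=s_{p+1}\cdots s_k$ and conjugating by $w_1$ (the prefix $s_1\cdots s_p$ telescopes into each term) gives $w_1N(w_2)w_1^{-1}=\{c_{p+1},\dots,c_k\}$, again pairwise distinct. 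In particular the intersection is contained in $\{c_1,\dots,c_p\}$, so it suffices to decide, for each $i\le p$, whether $c_i$ lies in $w_1N(w_2)w_1^{-1}$.

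The second step is to convert this membership into a length condition. A direct computation gives $w_1^{-1}c_iw_1=u^{-1}s_iu$ with $u=s_{i+1}\cdots s_p$, so $c_i\in w_1N(w_2)w_1^{-1}$ iff $u^{-1}s_iu\in N(w_2)$. Since $s_i\cdots s_p$ is a suffix of the reduced word $w_1$, hence reduced, we have $\ell(s_iu)>\ell(u)$, which in the root picture of Remark~\ref{rem:roots} says exactly that $\gamma:=u^{-1}(\alpha_{s_i})\in\Phi^+$; consequently $\gamma$ is the positive root attached to the reflection $t:=u^{-1}s_iu$. Using $N(x)=\{t:x^{-1}(\alpha_t)\in\Phi^-\}$, both conditions $t\in N(w_2)$ and $s_i\in N(uw_2)$ unwind to the single condition $w_2^{-1}(\gamma)\in\Phi^-$ and are therefore equivalent. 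Hence $c_i$ lies in the intersection iff $i\in D$, where
\[
D:=\{i\in\{1,\dots,k\}:\ell(s_is_{i+1}\cdots s_k)<\ell(s_{i+1}\cdots s_k)\},
\]
so that $N(w_1)\cap w_1N(w_2)w_1^{-1}=\{c_i:i\in D\}$.

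The remaining and main task is to prove $D=\{a_1,\dots,a_r\}$; the lemma then follows, the asserted distinctness being immediate from the distinctness of $c_1,\dots,c_p$. I would argue by induction on $r$, the case $r=0$ being the reduced case where $D=\varnothing$. For the inductive step, the maximality in the choice of $a_1$ forces $\ell(s_a\cdots s_k)=\ell(s_{a+1}\cdots s_k)+1$ for every $a>a_1$ and $\ell(s_{a_1}\cdots s_k)<\ell(s_{a_1+1}\cdots s_k)$, so $a_1=\max D$ and no index exceeding $a_1$ lies in $D$. To treat the smaller indices I would pass to $\mathbf{s}^{(1)}=s_1\cdots\widehat{s_{a_1}}\cdots\widehat{s_{b_1}}\cdots s_k$, whose deletion pairs are $(a_2,b_2),\dots,(a_r,b_r)$. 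The crucial point is the identity $s_{a_1+1}\cdots\widehat{s_{b_1}}\cdots s_k=s_{a_1}\cdots s_k$, a consequence of $s_{a_1+1}\cdots s_{b_1}=s_{a_1}\cdots s_{b_1-1}$ noted before Lemma~\ref{lem:braid-decomposition}: it implies that, for every $i<a_1$, the suffix of $\mathbf{s}^{(1)}$ from position $i$ onward represents the same group element as $s_i\cdots s_k$. Thus the length-drop set of $\mathbf{s}^{(1)}$ coincides with $D$ on all indices below $a_1$, and the induction hypothesis applied to $\mathbf{s}^{(1)}$ closes the argument, yielding $D=\{a_1\}\sqcup\{a_2,\dots,a_r\}$.

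I expect this third step to be the crux. The delicate issue is that the deletion procedure is recursive, and after a single deletion the inner conjugating prefixes change, so $\mathbf{s}^{(1)}$ is in general \emph{not} the concatenation of two reduced expressions and its intrinsic reflections no longer match the $c_i$; a naive induction carried out on the pair $(w_1,w_2)$ therefore breaks down (one can already see this in the $G_2$-type computations underlying Example~\ref{ex:easy}). The device that rescues the argument is to discard the reflections and retain only the \emph{lengths} of suffixes: these transfer verbatim from $\mathbf{s}^{(1)}$ to the original word through the rewriting identity above, which is precisely why the closed-form set $D$ is the correct intermediary between the inversion-set intersection computed in the first two steps and the combinatorially defined deletion indices $a_1,\dots,a_r$.
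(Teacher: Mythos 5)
Your proof is correct, but it takes a genuinely different route from the paper's. The paper first uses the cocycle rule~\eqref{eq:cocycle} to identify $N(w_1)\cap w_1N(w_2)w_1^{-1}$ with $N(w_1)\setminus N(w_1w_2)$ and to compute that this set has exactly $r$ elements; it therefore only needs the non-membership statements $s_1\cdots s_{a_i-1}s_{a_i}s_{a_i-1}\cdots s_1\notin N(w_1w_2)$, which it proves by an induction on $r$ that peels off at the \emph{smallest} deletion index $a_r$: the suffix sequence $(s_{a_r+1},\dots,s_k)$ is still a concatenation of two reduced expressions (a suffix of $w_1$'s reduced word followed by $w_2$'s), so the induction never leaves the hypothesis class of the lemma, and the difficulty you single out as the crux simply never arises. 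Your proof instead performs the \emph{first} deletion $(a_1,b_1)$, which, as you correctly observe, destroys the concatenation structure; your fix is to induct on a strictly more general, purely combinatorial claim that the paper never formulates — that the suffix length-drop set $D$ of an \emph{arbitrary} word equals its set of deletion indices $\{a_1,\dots,a_r\}$ — and to connect $D$ to the inversion-set intersection by a separate argument through the root dictionary of Remark~\ref{rem:roots} (which the paper states it will not use in proofs, though there is no circularity in your doing so, since those facts come from Dyer's paper independently of this lemma). What the paper's route buys is economy: one containment plus a cardinality count, no roots, and no auxiliary statement about arbitrary words. What your route buys is an exact membership criterion (for $i\le p$, the reflection $c_i$ lies in the intersection iff $i\in D$) with no counting argument, together with the isolated lemma $D=\{a_1,\dots,a_r\}$, which applies to words that are not concatenations of reduced expressions and could be of independent use. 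Both arguments are complete; in particular your transfer of suffix lengths from the original word to $\mathbf{s}^{(1)}$ via the identity $s_{a_1+1}\cdots s_{b_1}=s_{a_1}\cdots s_{b_1-1}$, including the boundary case $i+1=a_1$, checks out.
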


\begin{proof}
Let $w=w_1w_2=s_1\cdots s_k$. In this proof, to save space, we use the temporary notation
\[
t_{[a,b]}:=s_as_{a+1}\cdots s_{b-1}s_bs_{b-1}\cdots s_{a+1}s_a,\ \text{for}\ 1\leq a<b\leq k.
\]
Recall that $N(w_1)=\{t_{[1,1]},t_{[1,2]},\dots,t_{[1,p]}\}$ and that these $p$ elements are distinct. Note that we have $a_1\le p$, since $s_{p+1}\cdots s_k$ is reduced. So the $r$ elements $t_{[1,a_i]}$ listed in the statement are distinct and all belong to $N(w_1)$. From~\eqref{eq:cocycle} we see that $N(w_1)\cap w_1N(w_2)w_1^{-1}=N(w_1)\setminus N(w)$ and that this set has cardinality $\frac{\ell(w_1)+\ell(w_2)-\ell(w)}{2}=r$, so it suffices to prove that $t_{[1,a_i]}\notin N(w)$ for all $i$.

We prove this last statement by induction on $r$, the $r=0$ case being vacuously true. Assume that $r\geq 1$. From~\eqref{eq:cocycle} we have
\begin{equation}\label{eq:crucial}
N(w)=\{t_{[1,1]},t_{[1,2]},\dots,t_{[1,a_r]}\}+s_1\cdots s_{a_r}N(s_{a_r+1}\cdots s_k)s_{a_r}\cdots s_{a_1}.
\end{equation}
The induction hypothesis applies to the sequence $(s_{a_r+1},\dots,s_k)$, for which the corresponding sequence of pairs is $(a_1-a_r,b_1-a_r),(a_2-a_r,b_2-a_r),\dots,(a_{r-1}-a_r,b_{r-1}-a_r)$, and tells us that $t_{[a_r+1,a_i]}\notin N(s_{a_r+1}\cdots s_k)$ for all $i<r$. As $t_{[1,1]},t_{[1,2]},\dots,t_{[1,p]}$ are all distinct, we conclude that for $i<r$, $t_{[1,a_i]}$ does not belong to either set on the right-hand side of~\eqref{eq:crucial}. On the other hand, by definition of $a_r$ we have $s_{a_r}\in N(s_{a_r+1}\cdots s_k)$, so $t_{[1,a_r]}$ belongs to both sets on the right-hand side of~\eqref{eq:crucial}. In either case we are done.
\end{proof}

\subsection{Properties of positive lifts}
Now we return to considering the constructions associated with the choice of a reflection subgroup $W_0$ of our Coxeter group $W$.
 
We let $K_0$ be the subgroup of $P=\ker(\pi:B\onto W)$ generated by the elements of the form $\beta\sigma^2\beta^{-1}$ where $\beta\in B$ and $\sigma\in\Sigma$ are such that the reflection $\pi(\beta\sigma\beta^{-1})\in T$ does not belong to $W_0$. This is consistent with our previous definition of $K_0$ in the case that $W$ is finite; see Section~\ref{ss:topology} below. It is clear that $K_0$ is normal in $\pi^{-1}(W_0)$ and in $\widehat{B}_0=\pi^{-1}(N_W(W_0))$. Thus, we can still define $\widetilde{B}_0=\widehat{B}_0/K_0$ in this context, along with the projection $\widetilde{\pi}_0:\widetilde{B}_0\onto N_W(W_0)$ induced by $\pi$.

Define a map $$\psi:N_W(W_0)\to\widetilde{B}_0$$ by $\psi(w)=\underline{w} K_0$. Note that $\psi$ is injective, because $\widetilde{\pi}_0\circ\psi=\mathrm{id}$. The map $\psi$ is not a homomorphism but has the following `partial homomorphism' property:
\begin{proposition} \label{prop:lifting-hom}
Let $w_1,w_2\in N_W(W_0)$. If we have $N(w_1^{-1})\cap N(w_2)\cap W_0=\emptyset$, then $\psi(w_1w_2)=\psi(w_1)\psi(w_2)$.
\end{proposition}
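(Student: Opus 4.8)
The plan is to reduce the assertion to a membership statement in $K_0$, and then to identify the relevant reflections using Lemma~\ref{lem:concatenation}. Concretely, I would choose reduced expressions $w_1=s_1\cdots s_p$ and $w_2=s_{p+1}\cdots s_k$, so that $\underline{w_1}=\sigma_1\cdots\sigma_p$ and $\underline{w_2}=\sigma_{p+1}\cdots\sigma_k$, and hence $\underline{w_1}\,\underline{w_2}=\sigma_1\cdots\sigma_k$. The concatenated word $(s_1,\dots,s_k)$ is a (generally non-reduced) expression for $w:=w_1w_2$, and I would run the deletion process of the previous subsection on it to obtain the pairs $(a_1,b_1),\dots,(a_r,b_r)$.

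Applying Lemma~\ref{lem:braid-decomposition} then gives $\underline{w_1}\,\underline{w_2}=C\,\underline{w}$, where $C=\prod_{i=1}^{r}\beta_i\sigma_{a_i}^2\beta_i^{-1}$ with $\beta_i=\sigma_1\cdots\sigma_{a_i-1}$. Since $\underline{w}=\underline{w_1w_2}$, and since $K_0$ is normal in $\widehat{B}_0$, proving the proposition amounts exactly to showing $C\in K_0$: for then $\psi(w_1)\psi(w_2)=\underline{w_1}\,\underline{w_2}\,K_0=C\,\underline{w}\,K_0=\underline{w}\,K_0=\psi(w_1w_2)$. To establish $C\in K_0$ it suffices to check that each factor $\beta_i\sigma_{a_i}^2\beta_i^{-1}$ is one of the defining generators of $K_0$, that is, that the reflection $\pi(\beta_i\sigma_{a_i}\beta_i^{-1})=s_1\cdots s_{a_i-1}s_{a_i}s_{a_i-1}\cdots s_1$ does not lie in $W_0$. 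By Lemma~\ref{lem:concatenation}, these reflections are precisely the elements of $N(w_1)\cap w_1N(w_2)w_1^{-1}$, and conjugating by $w_1^{-1}$ and using~\eqref{eq:right-inversions} identifies this set with $w_1\bigl(N(w_1^{-1})\cap N(w_2)\bigr)w_1^{-1}$.

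The only genuinely substantive point, and the step I expect to carry the weight of the argument, is the transfer of the hypothesis across this conjugation. Since $w_1\in N_W(W_0)$, conjugation by $w_1$ preserves $W_0$, so a reflection in $N(w_1)\cap w_1N(w_2)w_1^{-1}$ lies in $W_0$ if and only if its $w_1^{-1}$-conjugate, which lies in $N(w_1^{-1})\cap N(w_2)$, lies in $W_0$. The hypothesis $N(w_1^{-1})\cap N(w_2)\cap W_0=\emptyset$ then forces every such reflection out of $W_0$, so each factor of $C$ is a generator of $K_0$, whence $C\in K_0$ and the proposition follows. All the real bookkeeping is already done inside Lemmas~\ref{lem:braid-decomposition} and~\ref{lem:concatenation}; the remaining work is simply to assemble these two computations and to invoke, at the single crucial moment, that $w_1$ normalizes $W_0$.
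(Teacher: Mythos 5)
Your proposal is correct and takes essentially the same route as the paper's own proof: concatenate reduced expressions, apply Lemma~\ref{lem:braid-decomposition} to write $\underline{w_1}\,\underline{w_2}$ as a product of conjugated squares times $\underline{w}$, identify the corresponding reflections as $N(w_1)\cap w_1N(w_2)w_1^{-1}$ via Lemma~\ref{lem:concatenation}, and use the normalizer hypothesis to convert the assumption $N(w_1^{-1})\cap N(w_2)\cap W_0=\emptyset$ into the statement that none of those reflections lies in $W_0$. The only difference is cosmetic: you spell out the conjugation-by-$w_1$ step that the paper compresses into ``our hypothesis is equivalent to $N(w_1)\cap w_1N(w_2)w_1^{-1}\cap W_0=\emptyset$.''
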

\begin{proof}
Let $w=w_1w_2$. We must prove that $\underline{w} K_0=\underline{w_1}\,\underline{w_2} K_0$.
We form a sequence $(s_1,\dots,s_k)$ of elements of $S$ by concatenating a reduced expression for $w_1$ and a reduced expression for $w_2$, as in Lemma~\ref{lem:concatenation}. Apply Lemma~\ref{lem:braid-decomposition} to this sequence: the left-hand side is exactly $\underline{w_1}\,\underline{w_2}$, so it suffices to prove that each of the elements $\sigma_1\cdots\sigma_{a_i-1}\sigma_{a_i}^2\sigma_{a_i-1}^{-1}\cdots\sigma_1^{-1}$ on the right-hand side belongs to $K_0$. By definition of $K_0$, it suffices to show that the reflection $s_1\cdots s_{a_i}\cdots s_1$ does not belong to $W_0$. But by Lemma~\ref{lem:concatenation}, this reflection belongs to $N(w_1)\cap w_1N(w_2)w_1^{-1}$, and our hypothesis is equivalent to $N(w_1)\cap w_1N(w_2)w_1^{-1}\cap W_0=\emptyset$.
\end{proof}

\begin{corollary} \label{cor:artin-lifting}
The restriction $\psi:W_0\into\pi^{-1}(W_0)/K_0$ is multiplicative on reduced expressions for the Coxeter system $(W_0,S_0)$. In other words, we have the following:
\begin{enumerate}
\item If $w_1,w_2\in W_0$ are such that $\ell_0(w_1w_2)=\ell_0(w_1)+\ell_0(w_2)$ where $\ell_0$ denotes the length function on $W_0$ relative to the generating set $S_0$, then $\psi(w_1 w_2)=\psi(w_1)\psi(w_2)$.
\item If $w\in W_0$ has a reduced expression $w=t_1t_2\cdots t_k$ in terms of the generating set $S_0$, then $\psi(w)=\psi(t_1)\psi(t_2)\cdots\psi(t_k)$.
\item With $B_0$ denoting the Artin group of the Coxeter system $(W_0,S_0)$, there is a unique group homomorphism $\widetilde{\psi}:B_0\to\pi^{-1}(W_0)/K_0$ such that, for any $w\in W_0$ with positive lift $\beta\in B_0$, we have $\widetilde{\psi}(\beta)=\psi(w)$.
\end{enumerate} 
\end{corollary}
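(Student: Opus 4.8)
The plan is to deduce all three parts from Proposition~\ref{prop:lifting-hom}, the main work being to translate its hypothesis into a length condition inside $W_0$. For part~(1), I would first recall from Lemma~\ref{lem:dyer} that the inversion set of an element $w\in W_0$ with respect to the Coxeter system $(W_0,S_0)$ is precisely $N(w)\cap W_0$; in particular $\ell_0(w)=|N(w)\cap W_0|$. The cocycle rule~\eqref{eq:cocycle} and the discussion immediately following it are valid in any Coxeter system, so applying them to $(W_0,S_0)$ shows that $\ell_0(w_1w_2)=\ell_0(w_1)+\ell_0(w_2)$ holds if and only if $(N(w_1^{-1})\cap W_0)\cap(N(w_2)\cap W_0)=\emptyset$, i.e.\ $N(w_1^{-1})\cap N(w_2)\cap W_0=\emptyset$. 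This is exactly the hypothesis of Proposition~\ref{prop:lifting-hom}, so part~(1) follows at once.

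For part~(2), I would argue by induction on $k$. If $w=t_1\cdots t_k$ is reduced in $(W_0,S_0)$, then the prefix $t_1\cdots t_{k-1}$ is reduced and $\ell_0(w)=\ell_0(t_1\cdots t_{k-1})+\ell_0(t_k)$, so part~(1) gives $\psi(w)=\psi(t_1\cdots t_{k-1})\psi(t_k)$, and the inductive hypothesis applied to the prefix completes the computation.

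For part~(3), I would define $\widetilde{\psi}$ on the standard Artin generators of $B_0$ by $\tau\mapsto\psi(t)$ for $t\in S_0$, and check that this respects the braid relations of $B_0$. For a pair $t_i,t_j\in S_0$ with $m_{ij}<\infty$, the alternating word $w=t_it_jt_i\cdots=t_jt_it_j\cdots$ (each expression having $m_{ij}$ letters) is the longest element of the corresponding dihedral subgroup, and both expressions are reduced in $(W_0,S_0)$; applying part~(2) to each expression shows that $\psi(t_i)\psi(t_j)\cdots$ and $\psi(t_j)\psi(t_i)\cdots$ both equal $\psi(w)$, which is exactly the required braid relation (when $m_{ij}=\infty$ there is no relation to check). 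The universal property of the Artin presentation then yields a unique homomorphism $\widetilde{\psi}$ with the prescribed values on generators. Finally, for $w\in W_0$ with positive lift $\beta=\tau_1\cdots\tau_k\in B_0$ arising from a reduced expression $w=t_1\cdots t_k$, part~(2) gives $\widetilde{\psi}(\beta)=\psi(t_1)\cdots\psi(t_k)=\psi(w)$; well-definedness of $\beta$ is guaranteed by the Matsumoto--Tits theorem, and uniqueness of $\widetilde{\psi}$ is forced since the $\tau$ generate $B_0$ and each is the positive lift of a generator $t\in S_0$.

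The only genuinely delicate point is the equivalence used in part~(1): one must be careful to apply the cocycle rule \emph{within} $(W_0,S_0)$ rather than within $(W,S)$, invoking Lemma~\ref{lem:dyer} to identify the $W_0$-inversion sets with the traces $N(\cdot)\cap W_0$. Once this identification is in place, the remaining steps are routine bookkeeping together with a standard appeal to the Artin group presentation.
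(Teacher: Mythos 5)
Your proposal is correct and follows essentially the same route as the paper: part~(1) is reduced to Proposition~\ref{prop:lifting-hom} by invoking Lemma~\ref{lem:dyer} to identify the $(W_0,S_0)$-inversion sets with $N(\cdot)\cap W_0$, part~(2) follows by induction, and part~(3) uses the fact that the braid relations of $B_0$ are equalities between positive lifts of reduced expressions. The only difference is one of exposition—you spell out the dihedral longest-element argument and the appeal to Matsumoto--Tits that the paper compresses into a single sentence.
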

\begin{proof}
We first prove (1). Since the inversion sets of $w_1^{-1}$ and $w_2$ relative to the Coxeter system $(W_0,S_0)$ are $N(w_1^{-1})\cap W_0$ and $N(w_2)\cap W_0$ respectively, the assumption that $\ell_0(w_1w_2)=\ell_0(w_1)+\ell_0(w_2)$ means that $$(N(w_1^{-1})\cap W_0)\cap (N(w_2)\cap W_0)=\emptyset.$$ Hence the hypothesis of Proposition~\ref{prop:lifting-hom} is satisfied and (1) follows. Now (2) is an immediate consequence of (1), and (3) follows from (2) because the braid relations defining $B_0$ are equalities between positive lifts of reduced expressions. 
\end{proof}

\begin{corollary} \label{cor:u-lifting}
Let $w_1,w_2\in N_W(W_0)$. If at least one of $w_1,w_2$ belongs to $U_0$, then $\psi(w_1w_2)=\psi(w_1)\psi(w_2)$. In particular, we have:
\begin{enumerate}
\item The restriction $\psi:U_0\into\widetilde{B}_0$ is a group homomorphism.
\item For all $u\in U_0$ and $t\in S_0$ we have $\psi(u)\psi(t)=\psi(utu^{-1})\psi(u)$.
\end{enumerate}
\end{corollary}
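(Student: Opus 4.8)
The plan is to reduce everything to Proposition~\ref{prop:lifting-hom} by verifying its hypothesis $N(w_1^{-1})\cap N(w_2)\cap W_0=\emptyset$ whenever at least one of $w_1,w_2$ lies in $U_0$. First I would treat the case $w_1\in U_0$: by definition $N(w_1^{-1})\cap W_0=\emptyset$ (using the right-inversion characterization of $U_0$ recorded just before Lemma~\ref{lem:semi-direct}), so already $N(w_1^{-1})\cap W_0=\emptyset$ and hence the triple intersection is empty regardless of $w_2$. Next I would treat the case $w_2\in U_0$: here $N(w_2)\cap W_0=\emptyset$ by the definition of $U_0$, so again the triple intersection vanishes. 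In either case Proposition~\ref{prop:lifting-hom} applies and gives $\psi(w_1w_2)=\psi(w_1)\psi(w_2)$. This establishes the main assertion of the corollary.

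For part (1), I would restrict to $w_1,w_2\in U_0$. The main assertion already gives $\psi(u_1u_2)=\psi(u_1)\psi(u_2)$ for all $u_1,u_2\in U_0$, and since $U_0$ is a group (Lemma~\ref{lem:semi-direct}) with $\psi(1)=\underline{1}K_0=K_0$ the identity, this multiplicativity together with injectivity of $\psi$ (noted before Proposition~\ref{prop:lifting-hom}) shows that $\psi|_{U_0}$ is a group homomorphism into $\widetilde{B}_0$. I should note in passing that $\psi|_{U_0}$ automatically lands in $\widetilde{B}_0$ because $U_0\subseteq N_W(W_0)$, which is the domain on which $\psi$ was defined.

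For part (2), fix $u\in U_0$ and $t\in S_0$. The key identity is $utu^{-1}\in S_0$, proved inside Lemma~\ref{lem:semi-direct}, so $utu^{-1}$ is itself a generator and $\psi(utu^{-1})$ makes sense. I would compute $\psi(u)\psi(t)$ by applying the main assertion with $w_1=u\in U_0$: this yields $\psi(ut)=\psi(u)\psi(t)$. Separately, applying the main assertion with $w_2=u\in U_0$ gives $\psi((utu^{-1})u)=\psi(utu^{-1})\psi(u)$. Since $(utu^{-1})u=ut$ as elements of $N_W(W_0)$, the two left-hand sides agree, and comparing yields $\psi(u)\psi(t)=\psi(utu^{-1})\psi(u)$, as required.

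The only mild subtlety, and the step I would be most careful about, is making sure the hypothesis of Proposition~\ref{prop:lifting-hom} is the correct one for each factoring: in the $w_1\in U_0$ argument I must use the right-inversion form of the definition of $U_0$, while in the $w_2\in U_0$ argument I use the left-inversion form, and the remark before Lemma~\ref{lem:semi-direct} guarantees these are the same set. Everything else is formal manipulation with $\psi$ and its injectivity; no hard combinatorics remains once Proposition~\ref{prop:lifting-hom} and Lemma~\ref{lem:semi-direct} are in hand.
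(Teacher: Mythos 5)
Your proposal is correct and takes essentially the same approach as the paper: the paper's proof likewise observes that the two (left- and right-inversion) characterizations of $U_0$ make the hypothesis of Proposition~\ref{prop:lifting-hom} automatic when either factor lies in $U_0$, with (1) and (2) then following formally. The paper states this in one sentence; your write-up simply spells out the same argument, including the correct two-sided application $\psi(ut)=\psi(u)\psi(t)$ and $\psi((utu^{-1})u)=\psi(utu^{-1})\psi(u)$ for part (2).
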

\begin{proof}
Since $U_0=\{w\in N_W(W_0); N(w)\cap W_0=\emptyset\}=\{w\in N_W(W_0); N(w^{-1})\cap W_0=\emptyset\}$, this follows immediately from Proposition~\ref{prop:lifting-hom}.
\end{proof}

To recover our short exact sequence~\eqref{eq:ses} we need to make the following assumption:
\begin{equation}
\label{eq:assumption}
\text{The homomorphism $\widetilde{\psi}:B_0\to\pi^{-1}(W_0)/K_0$ of Corollary~\ref{cor:artin-lifting}(3) is an isomorphism.}
\end{equation} 
We will show in Proposition~\ref{prop:topology} that~\eqref{eq:assumption} holds when $W$ is finite, using the interpretation of $B$ and $B_0$ as fundamental groups of orbit spaces of hyperplane complements as in Section~\ref{sec:definitions}. Remarks~\ref{rem:artin-hom} and~\ref{rem:parabolic-injectivity} discuss the case when $W$ is infinite.

Under the assumption~\eqref{eq:assumption}, we have an injective homomorphism $B_0\into\widetilde{B}_0$ which is the composition of $\widetilde{\psi}$ with the inclusion of $\pi^{-1}(W_0)/K_0$ in $\widetilde{B}_0$. We will show in Proposition~\ref{prop:topology} that, when $W$ is finite, this injective homomorphism coincides with the one defined previously, so we can without ambiguity identify $B_0$ with a subgroup of $\widetilde{B}_0$ in the current more general setting. It is clear from the definitions that this inclusion of $B_0$ in $\widetilde{B}_0$ again fits into a short exact sequence~\eqref{eq:ses} forming the top row of the commutative diagram~\eqref{eq:comm-diag} where the bottom row is the short exact sequence~\eqref{eq:ses-triv}. We can now state the main result of this section.

\begin{theorem} \label{thm:splitfiniteCox}
Let $W$ be a Coxeter group and $W_0$ a reflection subgroup of $W$ such that assumption~\eqref{eq:assumption} holds; for example, this holds if $W$ is finite. Then the splitting of the short exact sequence~\eqref{eq:ses-triv} given by Lemma~\ref{lem:semi-direct} lifts to a splitting of the short exact sequence~\eqref{eq:ses}. Namely, after identifying $N_W(W_0)/W_0$ with $U_0$, the splitting of~\eqref{eq:ses} is the homomorphism $\psi:U_0\into\widetilde{B}_0$. Hence we have a semidirect product decomposition $$\widetilde{B}_0=B_0\rtimes\psi(U_0).$$ The conjugation action of $\psi(U_0)$ on $B_0$ preserves the Artin generating set $\Sigma_0$ of $B_0$, and for $u\in U_0$, the action of $\psi(u)$ on $\Sigma_0$ is the same as the conjugation action of $u$ on $S_0$.
\end{theorem}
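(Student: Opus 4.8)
The plan is to assemble the pieces already established, since the substantive work has been carried out in Proposition~\ref{prop:lifting-hom} and its corollaries and the present argument is mostly a matter of identifications. First I would recall from Corollary~\ref{cor:u-lifting}(1) that the restriction $\psi:U_0\to\widetilde{B}_0$ is a group homomorphism. The key remaining point is that $\psi$ is a \emph{section} of the projection $\widetilde{B}_0\onto N_W(W_0)/W_0$, after the identification of $N_W(W_0)/W_0$ with $U_0$ coming from Lemma~\ref{lem:semi-direct}. To verify this I would use that $\widetilde{\pi}_0\circ\psi=\mathrm{id}$ on $N_W(W_0)$ (noted just before Proposition~\ref{prop:lifting-hom}): for $u\in U_0$ the element $\psi(u)$ maps under $\widetilde{\pi}_0$ to $u$, hence to the coset of $u$ in $N_W(W_0)/W_0$; under the identification $N_W(W_0)/W_0\cong U_0$ this coset corresponds to $u$ itself, so the composite $U_0\xrightarrow{\psi}\widetilde{B}_0\onto N_W(W_0)/W_0\cong U_0$ is the identity.

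Next I would invoke assumption~\eqref{eq:assumption}. This identifies $B_0$ (via the isomorphism $\widetilde{\psi}$) with the kernel $\pi^{-1}(W_0)/K_0$ of the projection $\widetilde{B}_0\onto N_W(W_0)/W_0$, so that the top row of~\eqref{eq:comm-diag} is precisely the short exact sequence~\eqref{eq:ses}. A group homomorphism sectioning the quotient map of a short exact sequence splits it; thus $\psi$ provides the desired splitting and yields the internal semidirect product $\widetilde{B}_0=B_0\rtimes\psi(U_0)$, with $\psi(U_0)\cap B_0=\{1\}$ and $\psi(U_0)B_0=\widetilde{B}_0$ following automatically.

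Finally, for the statement about the conjugation action, I would combine Corollary~\ref{cor:u-lifting}(2) with the description of the generators $\Sigma_0$. Under the isomorphism $\widetilde{\psi}$ of Corollary~\ref{cor:artin-lifting}(3), each Artin generator of $\Sigma_0$ corresponding to $t\in S_0$ is carried to $\psi(t)$, since the positive lift of $t$ in $B_0$ is exactly that generator. Rewriting Corollary~\ref{cor:u-lifting}(2) as $\psi(u)\psi(t)\psi(u)^{-1}=\psi(utu^{-1})$, and recalling from Lemma~\ref{lem:semi-direct} that $u\in U_0$ conjugates $S_0$ into $S_0$, I see that conjugation by $\psi(u)$ carries the generator corresponding to $t$ to the generator corresponding to $utu^{-1}$, which again lies in $\Sigma_0$. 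This shows simultaneously that $\psi(U_0)$ preserves $\Sigma_0$ and that the induced permutation of $\Sigma_0$ matches the conjugation action of $u$ on $S_0$.

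The bulk of the difficulty in this theorem lies in the earlier results — particularly the combinatorial control of inversion sets in Lemma~\ref{lem:concatenation} feeding into Proposition~\ref{prop:lifting-hom} — so the present step is essentially bookkeeping. The only point demanding genuine care is keeping track of the several identifications in play (of $N_W(W_0)/W_0$ with $U_0$, of $B_0$ with $\pi^{-1}(W_0)/K_0$, and of $\Sigma_0$ with $\{\psi(t)\,;\,t\in S_0\}$), so as to confirm that $\psi$ really sections the specific sequence~\eqref{eq:ses} and not some variant, and that assumption~\eqref{eq:assumption} is precisely what licenses the identification of the kernel with $B_0$.
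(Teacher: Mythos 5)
Your proof is correct and is essentially the paper's own argument: the paper's proof of Theorem~\ref{thm:splitfiniteCox} is the single sentence ``This follows from Corollary~\ref{cor:u-lifting} in view of~\eqref{eq:assumption},'' and your write-up simply makes explicit the same ingredients (Corollary~\ref{cor:u-lifting}(1) for the homomorphism property, $\widetilde{\pi}_0\circ\psi=\mathrm{id}$ for the section property, assumption~\eqref{eq:assumption} to identify $B_0$ with the kernel, and Corollary~\ref{cor:u-lifting}(2) together with Corollary~\ref{cor:artin-lifting}(3) for the action on $\Sigma_0$). The bookkeeping of identifications you carry out is exactly what the paper leaves implicit.
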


\begin{proof}
This follows from Corollary~\ref{cor:u-lifting} in view of~\eqref{eq:assumption}.
\end{proof}

\begin{example} \label{ex:easy-ctd}
We return to the setting of Example~\ref{ex:easy}, with $W=\langle s_1,s_2\rangle$ of type $G_2$ and $W_0=\langle s_1,s_2s_1s_2\rangle$ of type $A_2$. In this case, the Artin group corresponding to $W$ is $$B=\langle\sigma_1,\sigma_2\;|\;\sigma_1\sigma_2\sigma_1\sigma_2\sigma_1\sigma_2=\sigma_2\sigma_1\sigma_2\sigma_1\sigma_2\sigma_1\rangle.$$ The relevant subgroups of $B$ are as follows: 
\begin{itemize}
\item $K_0$ is the subgroup of $B$ normally generated by $\sigma_2^2$, 
\item $\pi^{-1}(W_0)$ is the subgroup of $B$ normally generated by $\sigma_1$ and $\sigma_2^2$, 
\item $\widehat{B}_0=B$ itself, since $W_0$ is normal in $W$.
\end{itemize} 
Clearly $\psi:W\to\widetilde{B}_0=B/K_0$ is not a homomorphism, because $\psi(s_1)^2=\sigma_1^2K_0\neq 1K_0=\psi(s_1^2)$. However, as an example of Corollary~\ref{cor:artin-lifting}, when $w_1=s_2s_1s_2s_1$ and $w_2=s_2s_1s_2$, we have 
$$\psi(w_1)\psi(w_2)=\sigma_2\sigma_1\sigma_2\sigma_1\sigma_2\sigma_1\sigma_2K_0=\sigma_2^2\sigma_1\sigma_2\sigma_1\sigma_2\sigma_1K_0=
\sigma_1\sigma_2\sigma_1\sigma_2\sigma_1K_0=\psi(w_1w_2).$$
Moreover, the restriction of $\psi$ to $U_0=\langle s_2\rangle$ is a homomorphism, in accordance with Corollary~\ref{cor:u-lifting}.
Corollary~\ref{cor:artin-lifting}(3) and the assumption~\eqref{eq:assumption}, true for finite $W$, imply that we can regard the Artin group $B_0$ of type $A_2$ as a subgroup of $\widetilde{B}_0$ by identifying the Artin generators of $B_0$ with $\sigma_1K_0$ and $\sigma_2\sigma_1\sigma_2K_0$. Then Theorem~\ref{thm:splitfiniteCox} states that we have a semidirect product decomposition $\widetilde{B}_0=B_0\rtimes\langle\sigma_2K_0\rangle$, where the conjugation action of $\psi(s_2)=\sigma_2 K_0$ interchanges $\sigma_1 K_0$ and $\sigma_2\sigma_1\sigma_2K_0$.
\end{example}

\begin{example}
Continue the notation of Example~\ref{ex:d_4}, where $W$ is of type $D_4$ and $W_0$ of type $4A_1$. In this case, the Artin group $B_0$ is $\Z^4$, identified with the subgroup of $\widetilde{B}_0=\widehat{B}_0/K_0$ generated by $$\sigma_1K_0, \sigma_2K_0, \sigma_4K_0, \sigma_3\sigma_1\sigma_2\sigma_4\sigma_3\sigma_1\sigma_2\sigma_4\sigma_3 K_0.$$ Theorem~\ref{thm:splitfiniteCox} states that we have a semidirect product decomposition $\widetilde{B}_0=\Z^4\rtimes\psi(U_0)$, where $\psi(U_0)$ is the Klein 4-group $\langle \sigma_3\sigma_1\sigma_2\sigma_3K_0,\sigma_3\sigma_2\sigma_4\sigma_3K_0\rangle$.
\end{example}

\subsection{The finite Coxeter case}
\label{ss:topology}

For the remainder of the section we assume that $W$ is a finite Coxeter group. Thus we are in the setting of Section~\ref{sec:definitions}, but in addition there is a real form $V$ of the complex reflection representation $\C^n$ on which $W$ acts as a finite real reflection group. We endow $V$ with a $W$-invariant inner product. For $t\in T$, we write $H_t$ for the hyperplane $\ker(t-\mathrm{id})$ in $\C^n$, and $V\cap H_t$ for the real hyperplane in $V$. We have a bijection $T\overset{\sim}{\to}\mathcal{A}:t\mapsto H_t$. The real hyperplane complement $V\cap X=V\setminus\bigcup_{t\in T}(V\cap H_t)$ is the union of contractible connected components called chambers, which are permuted simply transitively by $W$. Let $C$ be a chamber which is compatible with the simple system $S$, i.e.\ a chamber whose walls are open subsets of the hyperplanes $V\cap H_s$ for $s\in S$. Then the chambers adjacent to $C$ are those of the form $s(C)$ for $s\in S$. For any $w\in W$, the inversion set $N(w)$ consists exactly of those reflections $t\in T$ such that $V\cap H_t$ separates $C$ from $w(C)$.

We choose our base-point $\tilde{x}$ to belong to $C$. The two groups for which we have used the notation $B$, namely the Artin group of $(W,S)$ and the fundamental group $\pi_1(X/W,[\tilde{x}]_W)$, can be identified in a standard way so that the natural projections $\pi:B\onto W$ coincide. Recall how this standard identification works on the generators: for $s\in S$, the generator $\sigma=\underline{s}$ of the Artin group is identified with a special choice of braided reflection $\sigma_{H_s}$. As in~\cite{BRIESKORN,BMR} (for instance), this braided reflection is defined to be the homotopy class of the image in $X/W$ of a specific path from $\tilde{x}$ to $s(\tilde{x})$ in $X$, which is a perturbation of the straight-line path from $\tilde{x}$ to $s(\tilde{x})$ in $V$. Note that $s(\tilde{x})$ belongs to the chamber $s(C)$ which is adjacent to $C$. Let $\tilde{x}_s\in\overline{C}$ denote the orthogonal projection of $\tilde{x}$ onto the common wall of these chambers. Then the straight-line path from $\tilde{x}$ to $s(\tilde{x})$ in $V$ crosses $V\cap H_s$ at $\tilde{x}_s$, and does not intersect any other hyperplane in $\mathcal{A}$. To produce the desired path from $\tilde{x}$ to $s(\tilde{x})$ in $X$, one removes a small interval centred at $\tilde{x}_s$ from the straight-line path in $V$, and replaces it with a semicircle in $\C^n$ around $H_s$.

The element $\sigma^2\in P$ is then identified with the homotopy class of a special meridian around $H_s$, namely the loop in $X$ which travels on the straight line from $\tilde{x}$ almost as far as $\tilde{x}_s$, then traverses a full circle in $\C^n$ around $H_s$, and then returns along the same straight line to $\tilde{x}$. For any $\beta\in B$, the element $\beta\sigma^2\beta^{-1}\in P$ is a meridian around $H_{\pi(\beta\sigma\beta^{-1})}$, and every meridian around a hyperplane in $\mathcal{A}$ is of this form for some $\beta\in B$ and $\sigma\in\Sigma$. This is why the two descriptions we have given of generating sets for $P=\ker(B\onto W)$, and the two definitions we have given of its subgroup $K_0$, are consistent.     

The reflection subgroup $W_0$ has its own chambers, the connected components of $V\cap X^0$, each of which contains a number of chambers for $W$. Let $C_0$ be the unique chamber for $W_0$ which contains $C$, so in particular $\tilde{x}\in C_0$. Then the walls of $C_0$ are open subsets of the hyperplanes $V\cap H_t$ for $t\in S_0$. Replacing $(W,S)$ with $(W_0,S_0)$ in the above, we get an analogous standard identification between the two groups for which we have used the notation $B_0$, namely the Artin group of $(W_0,S_0)$ and the fundamental group $\pi_1(X^0/W_0,[\tilde{x}]_{W_0})$. 

On the other hand, in Corollary~\ref{cor:artin-lifting} we saw a homomorphism $\widetilde{\psi}$ from the Artin group of $(W_0,S_0)$ to the subquotient $\pi^{-1}(W_0)/K_0$ of the Artin group of $(W,S)$, uniquely specified by the non-homomorphic map $\psi:W_0\into\pi^{-1}(W_0)/K_0$ defined by $\psi(w)=\underline{w}K_0$. Once we make the standard identification of the Artin group of $(W,S)$ with $\pi_1(X/W,[\tilde{x}]_W)$, the subquotient $\pi^{-1}(W_0)/K_0$ becomes identified with $\pi_1(X^0/W_0,[\tilde{x}]_{W_0})$ as we saw in Section~\ref{sec:definitions}. So $\widetilde{\psi}$ becomes a homomorphism from $B_0$ to itself. The following result proves that~\eqref{eq:assumption} holds when $W$ is finite, and also that our use of the notation $B_0$ is consistent and unambiguous.

\begin{figure}
\begin{center}
\begin{tikzpicture}[scale=3]
\fill[gray] (1,0) -- (1,1) -- (0,1) -- (0,0) -- cycle;
\fill[blue] (1,0) -- (1,0.5773502693) -- (0,0) -- cycle;
\draw[thick] (-1,0) -- (1,0);
\draw[thick] (0,-1) -- (0,1);
\draw[blue] (0.8660254040,0.5) -- (-0.8660254040,-0.5); 
\draw[blue] (0.5,0.8660254040) -- (-0.5,-0.8660254040);
\draw[blue] (-0.8660254040,0.5) -- (0.8660254040,-0.5); 
\draw[blue] (-0.5,0.8660254040) -- (0.5,-0.8660254040);
\node[red] (PB) at (0.9*0.9659258263,0.9*0.2588190451) {$\bullet$};
\node[red] (PBR) at (-0.9*0.9659258263,0.9*0.2588190451) {$\bullet$};
\draw[red] (0.9*0.9659258263,0.9*0.2588190451) -- (0.9*
0.7071067810,0.9*
0.7071067810);
\draw[red] (0.9*
0.7071067810,0.9*
0.7071067810) -- (0.9*0.2588190451,0.9*0.9659258263);
\draw[red] (-0.9*0.2588190451,0.9*0.9659258263) -- (-0.05,0.9*0.9659258263);
\draw[red] (0.05,0.9*0.9659258263) -- (0.9*0.2588190451,0.9*0.9659258263);
\draw[red] (-0.9*0.2588190451,0.9*0.9659258263) -- (-0.9*
0.7071067810,0.9*
0.7071067810);
\draw[red] (-0.9*
0.7071067810,0.9*
0.7071067810) -- (-0.9*0.9659258263,0.9*0.2588190451);
\draw[red] (-0.9*0.9659258263,0.9*0.2588190451) -- (-0.05,0.9*0.2588190451);
\draw[red] (0.05,0.9*0.2588190451) -- (0.9*0.9659258263,0.9*0.2588190451);

\draw (1.2,0) node {$H_{s_1}$};
\draw (0.8660254040+0.3,0.5) node {$H_{s_2}$};
\draw (0.5+0.2,0.8660254040) node {$H_{s_2s_1s_2}$};
\draw (0,1.1) node {$H_{t}$};
\draw[blue] (1.2,0.3) node {$C$};
\draw[gray] (1.2,1) node {$C_0$};
\draw[red] (-0.3,0.3) node {$\mathcal{P}_1$};
\draw[red] (-0.3,0.95) node {$\mathcal{P}_2'$};
\draw[red] (-1.05,0.25) node {$t(\tilde{x})$};
\draw[red] (1.05,0.25) node {$\tilde{x}$};
\draw[red] (0.05,0.9*0.2588190451) arc (0:180:0.05);
\draw[red] (0.05,0.9*0.9659258263) arc (0:180:0.05);

\end{tikzpicture}
\end{center}
\caption{Chambers and paths for $\langle s_1,t \rangle < G_2$ with $t =s_2 s_1 s_2 s_1 s_2$ }
\label{fig:artingens}
\end{figure}

\begin{proposition} \label{prop:topology}
Interpreted as above, $\widetilde{\psi}:B_0\to B_0$ is the identity. Equivalently, for any $w\in W_0$, $\psi(w)$ equals the positive lift of $w$ to $B_0$ relative to the Coxeter generating set $S_0$. 
\end{proposition}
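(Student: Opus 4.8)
The plan is to argue topologically, working with the fundamental-group descriptions $B=\pi_1(X/W,[\tilde{x}]_W)$ and $B_0=\pi_1(X^0/W_0,[\tilde{x}]_{W_0})$ and exploiting that the homomorphism $\widetilde{\psi}$ of Corollary~\ref{cor:artin-lifting}(3) need only be checked on generators. Since $\widetilde{\psi}$ is a homomorphism and $B_0$ is generated by the Artin generators $\sigma^{(0)}_t$ (the positive lifts of the Coxeter generators $t\in S_0$ of $(W_0,S_0)$), it suffices to show that $\widetilde{\psi}(\sigma^{(0)}_t)=\sigma^{(0)}_t$ for each $t\in S_0$; by definition of $\widetilde{\psi}$ this amounts to proving $\psi(t)=\sigma^{(0)}_t$ in $B_0$. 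Here $\sigma^{(0)}_t$ is, under the standard identification, the braided reflection around $H_t$ built from the perturbed straight-line path from $\tilde{x}$ to $t(\tilde{x})$ relative to the chamber $C_0$, while $\psi(t)=\underline{t}K_0$ is the image in $\pi_1(X^0/W_0)=B_0$ of the path in $X$ representing the positive lift $\underline{t}\in B$.

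First I would unwind what the path for $\underline{t}$ looks like. It is (homotopic in $X$ to) the straight-line path from $\tilde{x}$ to $t(\tilde{x})$ in $V$, carrying a standard semicircular detour into $\C^n$ at each of its transversal crossings of the real hyperplanes $V\cap H_{t'}$ with $t'\in N(t)$, of which there are exactly $\ell(t)$. The crucial algebraic input is Lemma~\ref{lem:dyer}: since $t\in S_0$, we have $N(t)\cap W_0=\{t\}$, so among all these crossings exactly one, namely the crossing of $V\cap H_t$, involves a hyperplane in $\mathcal{A}_0$. This is precisely the fact that will collapse the long expression $\underline{t}$ down to a single braided reflection once we pass to $X^0$.

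Next I would pass to $X^0$, where only the hyperplanes in $\mathcal{A}_0$ are removed. For a crossing of a hyperplane $H_{t'}$ with $t'\notin W_0$, the detour semicircle together with the straight chord it replaces bounds a small half-disk in $\C^n$ that meets no $\mathcal{A}_0$-hyperplane (choosing $\tilde{x}$ generically so the straight line meets the hyperplanes one at a time), hence is null-homotopic rel endpoints in $X^0$. Undoing all such irrelevant detours leaves the straight segment from $\tilde{x}$ to $t(\tilde{x})$ in $V$ with a single standard detour at its crossing of $V\cap H_t$. I would then check that this crossing point is the midpoint $\tfrac{1}{2}(\tilde{x}+t\tilde{x})$, i.e.\ the orthogonal projection $\tilde{x}_t$ of $\tilde{x}$ onto $H_t$, and that $\tilde{x}_t$ lies in the wall $\overline{C_0}\cap H_t$ (a one-line computation from the fact that the simple roots of $W_0$ pair non-positively). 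Consequently the reduced path stays in $\overline{C_0}\cup\overline{t(C_0)}$ in $V$ and is exactly the defining path of the standard braided reflection $\sigma^{(0)}_t$ of $(W_0,S_0)$, so $\psi(t)=\sigma^{(0)}_t$ as required.

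The hard part will be the homotopy bookkeeping in $X^0$ rather than any single hard step: one must verify rigorously that every detour around a non-$\mathcal{A}_0$ hyperplane genuinely cancels within $X^0$ (which needs the generic-crossing hypothesis on $\tilde{x}$), and that what survives at $H_t$ matches the standard Artin generator $\sigma^{(0)}_t$ with the correct orientation rather than its inverse, which is where the common ``positive'' perturbation convention used for both $(W,S)$ and $(W_0,S_0)$ must be invoked. Everything hinges on the Dyer identity $N(t)\cap W_0=\{t\}$, which is what reduces the $\ell(t)$ crossings seen in $X$ to the single relevant crossing seen in $X^0$.
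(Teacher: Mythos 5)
Your overall strategy is the same as the paper's: reduce via Corollary~\ref{cor:artin-lifting} to checking $\psi(t)$ for $t\in S_0$, use Dyer's identity $N(t)\cap W_0=\{t\}$ to cancel, inside $X^0$, every detour except the one around $H_t$, and recognize what survives as the standard generator of $B_0$. The cancellation step and the final matching are fine. The genuine gap is at your very first step, and it is not where you locate the difficulty. You assert that the path representing $\underline{t}=\sigma_1\cdots\sigma_k\cdots\sigma_2\sigma_1$ ``is (homotopic in $X$ to)'' the straight-line path from $\tilde{x}$ to $t(\tilde{x})$ with a positive detour at each crossing. This does not follow from the standard identification $B\cong\pi_1(X/W,[\tilde{x}]_W)$: that identification prescribes perturbed straight-line paths only for the simple generators $s\in S$, and hence, by composition of paths, represents $\underline{t}$ by a perturbation of the piecewise-linear path through $\tilde{x}, s_1(\tilde{x}), s_1s_2(\tilde{x}),\dots,t(\tilde{x})$ --- exactly the path $\mathcal{P}_2$ the paper works with. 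Identifying that gallery path with the straight segment \emph{inside $X$}, where all $\ell(t)$ hyperplanes of $N(t)$ are still removed, is a substantive lemma: one needs (i) genericity of the segment $[\tilde{x},t(\tilde{x})]$, which can fail for the fixed basepoint (the segment can pass through a codimension-two stratum, e.g.\ in type $A_3$ with $t$ the transposition exchanging the first and fourth coordinates and $\tilde{x}=(0,1,2,3)$); (ii) the observation that the chamber sequence of the segment is a minimal gallery encoding \emph{some} reduced expression of $t$, generally different from the chosen palindromic one, so that independence of positive lifts from the reduced expression (Matsumoto/Tits) must be invoked; and (iii) a combinatorial homotopy lemma asserting that two positive transversal paths crossing the same walls through the same chambers are homotopic in $X$ (Salvetti-complex-type machinery). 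That package is comparable in weight to the whole proposition, and you supply none of it.

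The paper's proof is structured precisely to avoid this: it keeps the gallery path $\mathcal{P}_2$, which represents $\underline{t}$ by definition, performs the cancellation in $X^0$ to get $\mathcal{P}_2'$, and only \emph{then} compares with the standard path $\mathcal{P}_1$ --- a comparison that is cheap at that stage because the first half of the gallery lies in the single $W_0$-chamber $C_0$ and the second half in $t(C_0)$, both contractible. In short, the comparison ``gallery path versus straight-line path'' is easy in $X^0$ and hard in $X$; your proposal performs it in $X$, by assertion. The repair is immediate: start from the gallery path instead, run your cancellation argument verbatim, and finish with the contractibility of $C_0$ and $t(C_0)$ --- at which point you have reproduced the paper's proof. (Alternatively, your assertion could be salvaged by citing the known description of positive lifts by straight-line paths, but it must be proved or cited, not assumed.) As written, the ``hard part'' you identify --- the bookkeeping in $X^0$ and the orientation of the surviving detour --- is the easy part, and the hard part is the claim you dispatched in one clause.
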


\begin{proof}
In view of Corollary~\ref{cor:artin-lifting}, we need only check that, for any $t\in S_0$, $\psi(t)$ equals the corresponding Artin generator of $B_0$. For this, let $t=s_1s_2\cdots s_k\cdots s_2s_1$ be a palindromic reduced expression for $t$ in the generating set $S$. (Recall that any reflection $t$ has a palindromic reduced expression: starting with an arbitrary reduced expression $s_1 s_2\cdots s_{2k-1}$ for $t$, one can easily show as in~\cite[Lemma 2.7]{DYER_SUBGROUPS} that $t=s_1 s_2 \cdots s_k\cdots s_2 s_1$.)

We have two different ways to define a loop in $X^0/W_0$ based at $[\tilde{x}]_{W_0}$ associated to $t$, and we need to check that they are homotopic. The standard loop $\mathcal{L}_1$, whose homotopy class is the generator of $B_0$ corresponding to $t$, is the image in $X^0/W_0$ of the path $\mathcal{P}_1$ from $\tilde{x}$ to $t(\tilde{x})$ in $X^0$ obtained by perturbing the straight-line path in $V$, replacing a small interval centred on $\tilde{x}_t$ with a semicircle in $\C^n$ around $H_t$. 

The alternative loop $\mathcal{L}_2$, whose homotopy class is $\psi(t)$, actually lies in the subset $X/W_0\subseteq X^0/W_0$, and its image in $X/W$ has homotopy class $\sigma_1\sigma_2\cdots \sigma_k\cdots\sigma_2\sigma_1\in B$. One can construct $\mathcal{L}_2$ as the image in $X/W_0$ of a path $\mathcal{P}_2$ from $\tilde{x}$ to $t(\tilde{x})$ in $X$ obtained by perturbing the piecewise-linear path $\mathcal{P}_3$ in $V$ which travels from $\tilde{x}$ on a straight line to $s_1(\tilde{x})$, thence on a straight line to $s_1s_2(\tilde{x})$, thence on a straight line to $s_1s_2s_3(\tilde{x})$, and so on until one reaches $t(\tilde{x})$. Note that the straight line segments of $\mathcal{P}_3$ cross exactly one hyperplane each, namely the hyperplanes corresponding to the elements of $N(t)$ in the order listed as follows:
\[
N(t)=\{s_1, s_1s_2s_1,\cdots,s_1s_2\cdots s_k\cdots s_2 s_1,\cdots,s_1s_2\cdots s_k\cdots s_2 s_1 s_2 \cdots s_k \cdots s_2s_1\}.
\]
To obtain the path $\mathcal{P}_2$, one perturbs $\mathcal{P}_3$ by replacing small intervals centred on the various hyperplane crossing points with semicircles in $\C^n$ about the hyperplanes.

Since $t\in S_0$, the only element of $N(t)\cap W_0$ is $t$ itself, appearing in the middle position in the above list. Accordingly, of the hyperplanes which $\mathcal{P}_3$ crosses, only the middle one $H_t$ belongs to $\mathcal{A}_0$. Hence, when viewed as a path from $\tilde{x}$ to $t(\tilde{x})$ in $X^0$ rather than $X$, $\mathcal{P}_2$ is homotopic to another path $\mathcal{P}_2'$ which coincides with $\mathcal{P}_3$ except for maintaining the perturbation about the hyperplane $H_t$. Note that the first half of $\mathcal{P}_3$ lies entirely in $C_0$, and the second half lies entirely in $t(C_0)$. Since these chambers are contractible, $\mathcal{P}_2'$ is homotopic to the standard path $\mathcal{P}_1$, and we are done. (See Figure \ref{fig:artingens} for a picture showing the paths $\mathcal{P}_1$ and $\mathcal{P}_2'$ in a sample case.)
\end{proof}

\begin{remark} \label{rem:artin-hom}
The same argument shows that~\eqref{eq:assumption} also holds when $W$ is of affine type. In this case one can interpret $W$ as a reflection group on a Euclidean space $V$ with an arrangement consisting of affine rather than linear hyperplanes, and with chambers of finite volume, as in~\cite[Ch.~V, \S4, no.~9]{BOURBAKI}. The statements relating to $W$ and $B$ in the discussion before Proposition~\ref{prop:topology} remain true in this context, as seen in~\cite{SALVETTI,VANDERLEK} (for instance). By the classification of reflection subgroups of affine $W$ given in~\cite[Theorem 5.1(ii)]{DYER_SUBGROUPS}, the representation of $W_0$ on $V$ is a direct product of standard reflection representations of finite and affine Coxeter groups. Consequently, the statements relating to $W_0$ and $B_0$ in the discussion before Proposition~\ref{prop:topology} also remain true. The proof of Proposition~\ref{prop:topology} then goes through unchanged.  
\end{remark}

\begin{remark} \label{rem:parabolic-injectivity}
We do not yet know for which reflection subgroups of more general Coxeter groups the statement~\eqref{eq:assumption} holds. One observation we can make is that when $S$ is finite and $W_0$ is a parabolic subgroup of $W$, the homomorphism $\widetilde{\psi}$ is injective. To prove this, using Remark~\ref{rem:conjugacy} we can assume that $S_0\subseteq S$. In this case it was shown by van der Lek~\cite[Theorem 4.13]{VANDERLEK} that the natural homomorphism $B_0\to B$, mapping each Artin generator of $B_0$ to the corresponding Artin generator of $B$, is injective, so we can identify $B_0$ with a subgroup of $\pi^{-1}(W_0)$. Then $\widetilde{\psi}$ is the composition of the inclusion $B_0\into \pi^{-1}(W_0)$ with the projection $\pi^{-1}(W_0)\onto \pi^{-1}(W_0)/K_0$, so it is injective if and only if $P_0\cap K_0=\{1\}$. But by the proof of~\cite[Lemma 4.11]{VANDERLEK}, the inclusion $P_0\into P$ has a left inverse $P\onto P_0$, and from the topological definition of the latter it is clear that each generator of $K_0$ belongs to $\ker(P\onto P_0)$.     
\end{remark}

\subsection{Hecke algebras}
\label{ss:std}

Continue to assume that $W$ is a finite Coxeter group. The semidirect product decomposition of $\widetilde{B}_0$ induces a semidirect product decomposition of the corresponding Hecke algebra $\widetilde{H}_0$, which allows us to write down a standard basis for that algebra. Recall that for $w\in W_0$, the image of the positive lift $\psi(w)\in B_0$ in the Hecke algebra $H_0$ is written $T_w$, and the elements $\{T_w\}_{w\in W_0}$ form the standard basis of $H_0$. We simply extend this notation to $w\in N_W(W_0)$, writing $T_w$ for the image in $\widetilde{H}_0$ of $\psi(w)\in\widetilde{B}_0$.

\begin{theorem} \label{thm:std}
The elements $\{T_w\}_{w\in N_W(W_0)}$ form a basis of $\widetilde{H}_0$. The subset $\{T_w\}_{w\in W_0}$ spans a subalgebra which can be identified with $H_0$ with its standard basis. The subset $\{T_u\}_{u\in U_0}$ spans a subalgebra which can be identified with the group algebra $\kk U_0$ with its obvious basis. Multiplication induces a $\kk$-module isomorphism $H_0\otimes_{\kk} \kk U_0 \simto \widetilde{H}_0$ and we have $$T_w T_u = T_{wu}=T_u T_{u^{-1}wu}\ \text{for}\ w\in W_0, u\in U_0.$$   
\end{theorem}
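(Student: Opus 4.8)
The plan is to lift the group-theoretic semidirect product decomposition $\widetilde{B}_0 = B_0 \rtimes \psi(U_0)$ of Theorem~\ref{thm:splitfiniteCox} first to the group algebra and then to its Hecke quotient. At the group-algebra level, the equality $\widetilde{B}_0 = B_0 \rtimes \psi(U_0)$ says that every element of $\widetilde{B}_0$ is uniquely of the form $\beta\,\psi(u)$ with $\beta\in B_0$ and $u\in U_0$; hence $\kk\widetilde{B}_0$ is free as a left $\kk B_0$-module with basis $\{\psi(u)\}_{u\in U_0}$, and since $\psi|_{U_0}$ is a homomorphism (Corollary~\ref{cor:u-lifting}(1)) the subalgebra generated by the $\psi(u)$ is a copy of $\kk U_0$. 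Writing $\widetilde{H}_0 = \kk\widetilde{B}_0/\widetilde{I}$, where $\widetilde{I}$ is the two-sided ideal generated by the Hecke relations, and letting $I_0\subseteq\kk B_0$ be the Hecke ideal defining $H_0=\kk B_0/I_0$, the goal is to show $\widetilde{I} = \bigoplus_{u\in U_0} I_0\,\psi(u)$, so that passing to the quotient yields $\widetilde{H}_0 = \bigoplus_{u\in U_0} H_0\,T_u$, free over $H_0$ on $\{T_u\}_{u\in U_0}$.

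The crux is therefore the $U_0$-stability $\psi(u)\,I_0\,\psi(u)^{-1} = I_0$ for every $u\in U_0$, since once this is known a direct check shows $\bigoplus_{u} I_0\psi(u)$ is a two-sided ideal containing the Hecke relations, forcing it to coincide with $\widetilde{I}$. To establish the stability I would invoke Theorem~\ref{thm:splitfiniteCox}: conjugation by $\psi(u)$ permutes the Artin generating set $\Sigma_0$ of $B_0$ in exactly the way that $u$ permutes $S_0$. The Hecke relations generating $I_0$ are polynomials in $\sigma$ with coefficients in $\kk$, one for each $\sigma\in\Sigma_0$, so conjugation by $\psi(u)$ carries the relation attached to $\sigma$ to a relation of the same shape attached to $\psi(u)\sigma\psi(u)^{-1}\in\Sigma_0$; provided the Hecke parameters are constant on the $U_0$-orbits of $\Sigma_0$ (equivalently, on the $N_W(W_0)$-orbits of reflecting hyperplanes), these images are again among the defining relations, and $\psi(u)I_0\psi(u)^{-1}=I_0$ follows.

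Granting this, the remaining assertions are formal. The product map $H_0\otimes_\kk\kk U_0\to\widetilde{H}_0$, $a\otimes u\mapsto a\,T_u$, sends the $\kk$-basis $\{T_w\otimes u\}_{w\in W_0,\,u\in U_0}$ to $\{T_wT_u\}$, which by Corollary~\ref{cor:u-lifting} equals $\{T_{wu}\}$; as $N_W(W_0)=W_0\rtimes U_0$ (Lemma~\ref{lem:semi-direct}) this is precisely $\{T_x\}_{x\in N_W(W_0)}$, giving both the module isomorphism $H_0\otimes_\kk\kk U_0\simto\widetilde{H}_0$ and the basis statement. The subalgebra identifications are immediate: $\{T_w\}_{w\in W_0}$ spans the image of $\kk B_0/I_0=H_0$, and $\{T_u\}_{u\in U_0}$ spans the homomorphic image of $\kk U_0$. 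Finally the multiplication rules are read off from Corollary~\ref{cor:u-lifting}, namely $T_wT_u=T_{wu}$ because $\psi(wu)=\psi(w)\psi(u)$ when $u\in U_0$, while $T_uT_{u^{-1}wu}=T_{u\cdot u^{-1}wu}=T_{wu}$ by the same corollary applied with left factor $u\in U_0$.

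I expect the one genuinely non-formal point to be the $U_0$-stability of the Hecke ideal in the second paragraph: everything else is bookkeeping resting on Theorem~\ref{thm:splitfiniteCox} and Corollary~\ref{cor:u-lifting}, but the fact that conjugation by $\psi(U_0)$ permutes the defining relations \emph{among themselves}, rather than producing new relations of lower degree, is exactly where the compatibility of the Hecke parameters across a $U_0$-orbit of generators must be used. This is what guarantees that $\widetilde{H}_0$ is a genuine semidirect product of $H_0$ by $U_0$ and not a proper quotient, and I would expect it to be the delicate step worth spelling out in full.
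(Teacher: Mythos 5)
Your proof is correct and follows essentially the same route as the paper: the paper's own proof is a one-line combination of Theorem~\ref{thm:splitfiniteCox}, Corollary~\ref{cor:u-lifting} and the definition of $\widetilde{H}_0$ (with the passage from the group-level splitting to the algebra-level semidirect product asserted without proof in Section~\ref{sec:definitions}), and what you have written is exactly that argument with the ideal-theoretic bookkeeping made explicit.

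The one substantive point is the proviso you flag as delicate, namely that the Hecke parameters be constant on $N_W(W_0)$-orbits of hyperplanes, and you are right to insist on it: it is genuinely stronger than the convention stated in Section~\ref{sec:definitions}, where the indeterminates $a_{H,i}$ are indexed only by $W_0$-orbits of $H\in\mathcal{A}_0$, and with fully generic per-$W_0$-orbit parameters the theorem would actually fail. For instance, in Example~\ref{ex:d_4} ($W$ of type $D_4$, $W_0$ of type $4A_1$) the four $W_0$-orbits of hyperplanes in $\mathcal{A}_0$ are singletons permuted nontrivially by $U_0$, so conjugating the quadratic relation at $\sigma_1K_0$ by a suitable element of $\psi(U_0)$ yields a second, inequivalent quadratic relation satisfied by $\sigma_2K_0$; subtracting the two relations produces a nonzero $\kk$-linear combination of $T_{s_2}$ and $T_1$ vanishing in $\widetilde{H}_0$, contradicting the linear independence of $\{T_w\}_{w\in N_W(W_0)}$. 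So your hypothesis is not a formality but exactly the condition under which the theorem holds; it is satisfied in the intended setting of~\cite{YH2}, where the parameters descend from $W$-orbits of hyperplanes and are therefore $N_W(W_0)$-invariant, and read with that convention your proof is complete.
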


\begin{proof}
This follows by combining the definition of $\widetilde{H}_0$ with Theorem~\ref{thm:splitfiniteCox} and Corollary~\ref{cor:u-lifting}. 
\end{proof}

Let $(S_U, R_U)$ be a presentation by generators and relations for the monoid $U_0$, where $S_U$ is stable under taking inverses. It follows from Theorem~\ref{thm:std} that a presentation of $\widetilde{H}_0$ is obtained by taking as generating set $\{T_s\}_{s\in S_0} \cup \{T_u\}_{u\in S_U}$ with the following relations:
\begin{itemize}
\item the relations of the Hecke algebra $H_0$ on the elements $T_s$ for $s\in S_0$,
\item the relations $R_U$ on the elements $T_u$ for $u\in S_U$ (which entail in particular that $T_uT_{u^{-1}}=T_{u^{-1}}T_u=1$ for all $u\in S_U$),
\item the relations $T_{u^{-1}} T_s T_u=T_{u^{-1} s u}$ for all $u\in S_U, s\in S_0$. 
\end{itemize}
Recall that when $W_0$ is a parabolic subgroup of $W$, a presentation $(S_U,R_U)$ of $U_0$ can be given using the results of~\cite{BRINKHOWLETT, HOWLETT}. We are not aware of any similarly nice recipe for a presentation of $U_0$ when $W_0$ is an arbitrary reflection subgroup of $W$, but see Lemma~\ref{lem:presentation2} below for a related statement.


\section{Groupoid descriptions of normalizers}
\label{sec:groupoids}

In this section we present an alternative proof of the splitting of~\eqref{eq:ses}, which we find enlightening. The main idea is to adopt a more canonical point of view: instead of choosing a reflection subgroup $W_0$ of our Coxeter group $W$, we consider a groupoid (or rather, several groupoids) involving not just $W_0$ but all its conjugate subgroups. Thus we in fact upgrade the statement about groups to one about groupoids. The constructions and results of this section are valid for an arbitrary (not necessarily finite) Coxeter group, except for the splitting of the short exact sequence~\eqref{eq:ses} (proved after Theorem~\ref{thm:splitCox-groupoid} below), which as before is valid under the assumption~\eqref{eq:assumption}.

This idea was inspired by the Brink--Howlett groupoid description~\cite{BRINKHOWLETT} of the subgroup $U_0$ in the case where $W_0$ is a parabolic subgroup of $W$. However, our groupoids are different from theirs, and are defined for reflection subgroups which are not necessarily parabolic. We will comment further on the relationship between our groupoids and theirs in Remark~\ref{rem:brinkhowlett}.

\subsection{Preliminaries on groupoids}

A reference for the small amount of category theory we will need is \cite{MACLANE}. A groupoid is a small category $\mathcal{G}$ in which every morphism is invertible. We say that two objects $x,y$ of $\mathcal{G}$ are in the same connected component if $\Hom_{\mathcal{G}}(x,y)$ is non-empty. If this holds, then the groups $\End_{\mathcal{G}}(x)$ and $\End_{\mathcal{G}}(y)$ are isomorphic, with every morphism $\varphi\in\Hom_{\mathcal{G}}(x,y)$ defining an isomorphism $\End_{\mathcal{G}}(x)\overset{\sim}{\to}\End_{\mathcal{G}}(y)$ by conjugation.

Recall from \cite[Section II.8]{MACLANE} the general concepts of congruences on a category and quotient categories. To specify a congruence on a groupoid amounts to specifying a collection $K_\bullet=(K_x)$ of subgroups $K_x<\End_{\mathcal{G}}(x)$ for each object $x$ of $\mathcal{G}$, satisfying the compatibility condition that for any $\varphi\in\Hom_{\mathcal{G}}(x,y)$ we have $\varphi K_x\varphi^{-1}=K_y$. Note that this condition implies in particular that $K_x\vartriangleleft\End_{\mathcal{G}}(x)$ for every $x$. Given such $K_\bullet=(K_x)$, the quotient groupoid $\mathcal{G}/K_\bullet$ has the same objects as $\mathcal{G}$, and morphism sets $\Hom_{\mathcal{G}/K_\bullet}(x,y) =\Hom_{\mathcal{G}}(x,y)/\sim$, where the equivalence relation $\sim$ is defined by specifying that, for any $\psi,\psi'\in\Hom_{\mathcal{G}}(x,y)$,
$$\psi\sim\psi'\ \Longleftrightarrow\ \psi^{-1}\psi'\in K_x,$$
which is equivalent to $$\psi'\psi^{-1}\in K_y.$$
The composition of morphisms in $\mathcal{G}/K_\bullet$ is induced by that in $\mathcal{G}$. In other words, we have a full functor $\mathcal{G}\onto\mathcal{G}/K_\bullet$ which is the identity on objects and maps each morphism $\varphi\in\Hom_{\mathcal{G}}(x,y)$ to the equivalence class $\varphi K_x=K_y\varphi\in\Hom_{\mathcal{G}/K_\bullet}(x,y)$.

\subsection{Groupoids of reflection subgroups}

\label{ss:groupoids}

Let $(W,S)$ be a Coxeter system. Consider the groupoid $\mathcal{N}$ whose objects are the reflection subgroups of $W$, with 
$$
\Hom_{\mathcal{N}}(W_1,W_2):=\{w\in W; wW_1w^{-1}=W_2\}
$$
and composition given by multiplication in $W$. Thus for any reflection subgroup $W_0$ of $W$, the group $\End_{\mathcal{N}}(W_0)$ is exactly the normalizer $N_W(W_0)$. The connected components of $\mathcal{N}$ are the conjugacy classes of reflection subgroups of $W$. For what follows, it would make no difference if we restricted attention to a single conjugacy class of reflection subgroups, so a reader who prefers groupoids to be connected may imagine that we have done so.

When it is necessary to distinguish between elements of $W$ and the various morphisms in $\mathcal{N}$ which they represent, we will write the elements of $\Hom_{\mathcal{N}}(W_1,W_2)$ as arrows $W_2\overset{w}{\longleftarrow}W_1$. We use left-facing arrows so that the morphisms compose in the expected order:
\[
(W_3\overset{w}{\longleftarrow}W_2\overset{w'}{\longleftarrow}W_1)\ =\ W_3\overset{ww'}{\longleftarrow}W_1.
\]

The main advantage of considering these groupoids is that, although we do not know a general presentation of the group $N_W(W_0)$, it is easy to give a presentation for the groupoid $\mathcal{N}$ as a whole, in the sense of presentations of categories~\cite[Section II.8]{MACLANE}.

\begin{lemma} \label{lem:presentation1}
As a category, $\mathcal{N}$ has the following presentation: 
\begin{itemize}
\item the generators are the morphisms $sW_1s\overset{s}{\longleftarrow} W_1$, for $s$ any element of $S$ and $W_1$ any object of $\mathcal{N}$; 
\item the relations are the Coxeter relations
\[
(W_1 \overset{s}{\longleftarrow} sW_1s \overset{s}{\longleftarrow} W_1)\ =\  \mathrm{id}_{W_1},
\]
\[
\begin{split}
&(\underbrace{stst\cdots}_{m}W_1\underbrace{\cdots tsts}_{m}\overset{s}{\longleftarrow} 
\underbrace{tst\cdots}_{m-1}W_1\underbrace{\cdots tst}_{m-1}\overset{t}{\longleftarrow}
\cdots\longleftarrow W_1)\\
&=
(\underbrace{tsts\cdots}_{m}W_1\underbrace{\cdots stst}_{m}\overset{t}{\longleftarrow} 
\underbrace{sts\cdots}_{m-1}W_1\underbrace{\cdots sts}_{m-1}\overset{s}{\longleftarrow}
\cdots\longleftarrow W_1),
\end{split}
\]
for $s\neq t\in S$ such that $st$ has finite order $m$ in $W$.
\end{itemize}
\end{lemma}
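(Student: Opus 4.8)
The plan is to verify that the proposed data genuinely defines $\mathcal{N}$ by constructing the category $\mathcal{N}'$ from the stated generators and relations and exhibiting mutually inverse functors between $\mathcal{N}'$ and $\mathcal{N}$. First I would observe that there is an obvious functor $F:\mathcal{N}'\to\mathcal{N}$ which is the identity on objects and sends each generating arrow $sW_1s\overset{s}{\longleftarrow}W_1$ to the corresponding element $s\in\Hom_{\mathcal{N}}(W_1,sW_1s)$. This is well-defined because the Coxeter relations hold in $W$, hence in $\mathcal{N}$, so $F$ respects the defining relations of $\mathcal{N}'$. The bulk of the work is then to show $F$ is an isomorphism of categories, which since both are the identity on objects amounts to showing that $F$ induces a bijection $\Hom_{\mathcal{N}'}(W_1,W_2)\to\Hom_{\mathcal{N}}(W_1,W_2)$ for every pair of objects.

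For surjectivity, I would take any morphism $W_2\overset{w}{\longleftarrow}W_1$ in $\mathcal{N}$, choose a reduced (or indeed any) expression $w=s_1s_2\cdots s_k$ in the generators $S$, and build a composable chain of generating arrows in $\mathcal{N}'$:
\[
W_2=s_1\cdots s_k W_1 s_k\cdots s_1\overset{s_1}{\longleftarrow} s_2\cdots s_k W_1 s_k\cdots s_2\overset{s_2}{\longleftarrow}\cdots\overset{s_k}{\longleftarrow}W_1,
\]
whose image under $F$ is exactly $w$. Here one just has to check at each stage that the intermediate subgroup is a legitimate object and that consecutive arrows share the correct source and target, which is automatic from the definition of the generators. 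This shows every morphism of $\mathcal{N}$ lifts, so $F$ is full (surjective on each $\Hom$-set).

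For injectivity (faithfulness), the standard strategy is to produce a functor $G:\mathcal{N}\to\mathcal{N}'$ splitting $F$, or equivalently to show that any two expressions for the same element $w\in W$ give the same morphism in $\mathcal{N}'$. The cleanest route is to invoke Matsumoto's/Tits' theorem: any two reduced words for $w\in W$ are related by a sequence of braid moves, and both the braid moves and the relation for a non-reduced word $w=\cdots s s\cdots$ (which reduces via the relation $s\cdot s=\mathrm{id}$) are precisely the relations imposed in $\mathcal{N}'$. Thus one defines $G$ on objects as the identity and on a morphism $W_2\overset{w}{\longleftarrow}W_1$ by choosing the chain above for an arbitrary word representing $w$; the Coxeter relations in $\mathcal{N}'$ guarantee this is independent of the chosen word, so $G$ is well-defined, and $FG=\mathrm{id}_{\mathcal{N}}$, $GF=\mathrm{id}_{\mathcal{N}'}$. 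The main subtlety to get right is bookkeeping of the conjugated objects: one must confirm that applying a braid relation or a cancellation relation in $W$ between two words for $w$ corresponds, after conjugating $W_1$ along every prefix, to exactly one of the listed relations of $\mathcal{N}'$ with the correct intermediate objects. I expect this object-tracking through the braid moves to be the only genuinely delicate point; the rest is the routine verification that a category defined by the obvious generators and relations coincides with the evident target.
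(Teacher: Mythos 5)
Your proposal is correct and takes essentially the same approach as the paper: the paper's entire proof is the one-line remark that the presentation is ``obvious from the fact that $W$ itself has such a Coxeter presentation,'' and your argument is exactly the detailed verification of that claim --- transporting the Coxeter presentation of $W$ to the conjugation groupoid $\mathcal{N}$, with fullness coming from lifting words and faithfulness from Tits'/Matsumoto's solution of the word problem together with the object-tracking you correctly flag as the only delicate point. Nothing in your write-up diverges from, or leaves a gap in, what the paper treats as immediate.
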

\begin{proof}
This is obvious from the fact that $W$ itself has such a Coxeter presentation.
\end{proof}

Now let $\mathcal{U}$ be the sub-groupoid of $\mathcal{N}$ which has the same set of objects but with
\[
\Hom_{\mathcal{U}}(W_1,W_2):=\{w\in W; wW_1w^{-1}=W_2, N(w)\cap W_2=\emptyset\}.
\] 
Note that by~\eqref{eq:right-inversions} the condition $N(w)\cap W_2=\emptyset$ could be replaced by $N(w^{-1})\cap W_1=\emptyset$. The fact that this condition does define a sub-groupoid of $\mathcal{N}$ is an easy consequence of~\eqref{eq:cocycle}. For any object $W_0$, the group $\End_{\mathcal{U}}(W_0)$ is exactly the complementary subgroup $U_0$ of $W_0$ in $N_W(W_0)$ considered in the previous section.

Let $T_\bullet$ denote the `tautological' collection of subgroups $W_0<\End_{\mathcal{N}}(W_0)$ for all objects $W_0$ of $\mathcal{N}$, and let $\overline{\mathcal{N}}:=\mathcal{N}/T_\bullet$ be the quotient groupoid with $\End_{\overline{\mathcal{N}}}(W_0)=N_W(W_0)/W_0$. The splitting of~\eqref{eq:ses-triv} has the following groupoid version:
\begin{lemma} \label{lem:semi-direct-groupoid}
The composition $\mathcal{U}\into\mathcal{N}\onto\overline{\mathcal{N}}$ is an isomorphism of groupoids.
\end{lemma}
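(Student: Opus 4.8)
The plan is to show that the functor $\mathcal{U}\into\mathcal{N}\onto\overline{\mathcal{N}}$ is an isomorphism of groupoids by checking that it is bijective on objects and fully faithful. Bijectivity on objects is immediate: all three groupoids have the same set of objects (the reflection subgroups of $W$), and both the inclusion and the quotient functor are the identity on objects. So the entire content is to prove that for any two objects $W_1,W_2$, the composite map
\[
\Hom_{\mathcal{U}}(W_1,W_2)\longrightarrow\Hom_{\mathcal{N}}(W_1,W_2)\longrightarrow\Hom_{\overline{\mathcal{N}}}(W_1,W_2)
\]
is a bijection.

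Unpacking the definitions, this composite sends an element $w\in W$ with $wW_1w^{-1}=W_2$ and $N(w)\cap W_2=\emptyset$ to its class modulo the left action of $W_2=\End_{\mathcal{N}}(W_2)$, i.e.\ to the coset $W_2 w$ in the quotient groupoid. So I would argue as follows. First, \emph{surjectivity}: every morphism in $\overline{\mathcal{N}}$ from $W_1$ to $W_2$ is the class of some $w\in\Hom_{\mathcal{N}}(W_1,W_2)$, and I must produce a representative lying in $\Hom_{\mathcal{U}}(W_1,W_2)$. Given such $w$, consider its coset $W_2 w$; by Lemma~\ref{lem:dyer2} (applied to the reflection subgroup $W_2$) this coset contains a unique element $w'$ of minimal length, characterized by $N(w')\cap W_2=\emptyset$. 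Since $w'\in W_2 w$ we have $w'=xw$ for some $x\in W_2$, hence $w'W_1w'^{-1}=xwW_1w^{-1}x^{-1}=xW_2x^{-1}=W_2$, so $w'\in\Hom_{\mathcal{U}}(W_1,W_2)$ and it maps to the same class as $w$. This gives surjectivity.

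For \emph{injectivity}, suppose $w,w'\in\Hom_{\mathcal{U}}(W_1,W_2)$ map to the same class in $\overline{\mathcal{N}}$, meaning $w'=xw$ for some $x\in W_2$. Then both $w$ and $w'$ lie in the coset $W_2 w$ and both satisfy $N(\,\cdot\,)\cap W_2=\emptyset$; by the uniqueness clause in Lemma~\ref{lem:dyer2} there is exactly one such minimal-length element in the coset, so $w=w'$. Thus the map is injective. Combining the two, the composite is a bijection on each $\Hom$-set, so the functor is fully faithful, and being a bijection on objects as well, it is an isomorphism of groupoids.

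I do not expect any serious obstacle here: the lemma is essentially a repackaging of Lemma~\ref{lem:dyer2} (the existence and uniqueness of the minimal-length coset representative), together with the observation that membership in $\Hom_{\mathcal{U}}(W_1,W_2)$ is governed precisely by the inversion-set condition $N(w)\cap W_2=\emptyset$ that characterizes those representatives. The one point requiring a little care is the verification that the minimal-length representative $w'$ still conjugates $W_1$ to $W_2$, i.e.\ that replacing $w$ by $xw$ with $x\in W_2$ does not disturb the conjugation property; but this is automatic because $x$ normalizes $W_2$. Indeed this lemma is just the groupoid-level restatement of Lemma~\ref{lem:semi-direct}, whose endomorphism-group shadow is exactly the semidirect product decomposition $N_W(W_0)=W_0\rtimes U_0$.
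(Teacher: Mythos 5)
Your proof is correct, but its structure differs from the paper's. The paper argues in two steps: it first notes (via Lemma~\ref{lem:dyer2}) that $\mathcal{U}$, $\mathcal{N}$ and $\overline{\mathcal{N}}$ all have the same connected components, and then invokes the already-established group isomorphism $U_0\simto N_W(W_0)/W_0$ of Lemma~\ref{lem:semi-direct} to deal with the endomorphism groups; this implicitly relies on the general groupoid fact that a functor which is bijective on objects, induces a bijection on components, and restricts to isomorphisms of vertex groups is an isomorphism. You instead verify full faithfulness directly and uniformly on every $\Hom$-set: surjectivity is the existence of the minimal-length representative in the coset $W_2w$ given by Lemma~\ref{lem:dyer2} (together with the check, which you rightly single out, that translating $w$ by an element of $W_2$ does not disturb the condition $wW_1w^{-1}=W_2$), and injectivity is the uniqueness clause of the same lemma. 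What your route buys is self-containedness: you never need Lemma~\ref{lem:semi-direct} (your argument re-proves its bijectivity statement as the special case $W_1=W_2=W_0$), and you avoid appealing to the unstated categorical principle. What the paper's route buys is brevity, since the vertex-group case was already done and only the connectivity observation remains to be added.
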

\begin{proof}
Note that all the functors involved are the identity on the set of objects. It follows from Lemma~\ref{lem:dyer2} that the connected components of $\mathcal{U}$ are the same as those of $\mathcal{N}$, and hence the same as those of $\overline{\mathcal{N}}$. Therefore the claim follows from the group isomorphism $U_0\overset{\sim}{\to}N_W(W_0)/W_0$ proved in Lemma~\ref{lem:semi-direct}. 
\end{proof}
  
Of the generating morphisms $sW_1s\overset{s}{\longleftarrow} W_1$ of $\mathcal{N}$, those which belong to the sub-groupoid $\mathcal{U}$ are those where $s\notin W_1$, or equivalently $s\notin sW_1s$. (Note that it is possible to have $s\notin W_1$ and $sW_1s=W_1$.) A crucial observation is that these morphisms generate $\mathcal{U}$.

\begin{lemma} \label{lem:presentation2}
As a category, $\mathcal{U}$ has the following presentation: 
\begin{itemize}
\item the generators are those generators $sW_1s\overset{s}{\longleftarrow} W_1$ of $\mathcal{N}$ which belong to $\mathcal{U}$, i.e.\ satisfy the additional condition that $s\notin W_1$; 
\item the relations are the same Coxeter relations as in the above presentation of $\mathcal{N}$, whenever those relations involve only generators belonging to $\mathcal{U}$.
\end{itemize}
\end{lemma}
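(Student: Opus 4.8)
The plan is to identify the presented category---write $\widetilde{\mathcal{U}}$ for the category with the generators and relations listed in the statement---with $\mathcal{U}$ via the evident functor $F\colon\widetilde{\mathcal{U}}\to\mathcal{U}$ which is the identity on objects and sends each listed generator to the morphism of $\mathcal{U}$ it names. This $F$ is well defined because every listed relation already holds in $\mathcal{N}$ by Lemma~\ref{lem:presentation1}, and involves only morphisms of the subgroupoid $\mathcal{U}$. Since $F$ is a bijection on objects, it suffices to prove that $F$ is full and faithful.

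Fullness is the generation statement noted before the lemma. Given a morphism $W_2\overset{w}{\longleftarrow}W_1$ of $\mathcal{U}$, so that $wW_1w^{-1}=W_2$ and $N(w)\cap W_2=\emptyset$, equivalently $N(w^{-1})\cap W_1=\emptyset$ by~\eqref{eq:right-inversions}, I would choose a reduced expression $w=s_1\cdots s_k$ and factor the morphism in $\mathcal{N}$ as the composite whose $j$-th arrow is the generator $s_{j}W_1^{(j)}s_{j}\overset{s_j}{\longleftarrow}W_1^{(j)}$ with $W_1^{(j)}=s_{j+1}\cdots s_kW_1s_k\cdots s_{j+1}$. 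This arrow belongs to $\mathcal{U}$ precisely when the reflection $s_k\cdots s_{j+1}s_js_{j+1}\cdots s_k$ does not lie in $W_1$; but this reflection is the $j$-th right inversion of $w$, an element of $N(w^{-1})$, and hence lies outside $W_1$ by hypothesis. Thus each factor is a listed generator, and $F$ is full.

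For faithfulness I would construct a retraction. Let $\iota\colon\mathcal{U}\into\mathcal{N}$ be the inclusion, and define a functor $G\colon\mathcal{N}\to\widetilde{\mathcal{U}}$ on the generators of Lemma~\ref{lem:presentation1} by sending $sW_1s\overset{s}{\longleftarrow}W_1$ to the corresponding generator of $\widetilde{\mathcal{U}}$ when $s\notin W_1$, and to $\mathrm{id}_{W_1}$ when $s\in W_1$. Granting that $G$ respects the Coxeter relations of $\mathcal{N}$, so that it is a well-defined functor, the composite $G\circ\iota\circ F$ agrees with $\mathrm{id}_{\widetilde{\mathcal{U}}}$ on objects and on the listed generators, hence $G\iota F=\mathrm{id}_{\widetilde{\mathcal{U}}}$; this forces $F$ to be faithful and finishes the proof.

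The main obstacle is thus to verify that $G$ respects the Coxeter relations. The involution relations are immediate, since the two generators occurring in $W_1\overset{s}{\longleftarrow}sW_1s\overset{s}{\longleftarrow}W_1$ are simultaneously inside or outside $\mathcal{U}$ (because $s\in W_1\iff s\in sW_1s$), so the relation maps either to a listed involution relation or to $\mathrm{id}\cdot\mathrm{id}=\mathrm{id}$. The braid relations carry the real content. A braid relation for $s\neq t$ with $m=\mathrm{ord}(st)$, based at an object $W_1$, involves only the letters $s,t$ and objects in the $D$-orbit of $W_1$, where $D:=\langle s,t\rangle$; moreover, for $r\in\{s,t\}$ and $d\in D$ one has $r\in dW_1d^{-1}\iff r\in d(D\cap W_1)d^{-1}$, so neither the generators appearing nor their membership in $\mathcal{U}$ changes if we replace $(W,S,W_1)$ by $(D,\{s,t\},D\cap W_1)$. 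This reduces the check to the dihedral case $W=I_2(m)$, where the reflection subgroups $D\cap W_1$ are classified (trivial, rank one, or an $I_2(m')$ with $m'\mid m$) and the single braid relation can be verified directly: deleting the arrows not in $\mathcal{U}$ from the two sides leaves two composites of generators of $\mathcal{U}$ which I expect to coincide, either as identical words once the normalization $dW_1d^{-1}=W_1$ forced by a deleted arrow is taken into account, or after applying the listed involution and lower-rank braid relations. Carrying out this dihedral verification uniformly in $m$ is the one genuinely computational step.
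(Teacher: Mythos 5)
Your framework (identify the presented category $\widetilde{\mathcal{U}}$ with $\mathcal{U}$ through the evident functor $F$ and prove $F$ full and faithful) is sound, and your fullness argument is correct --- it is essentially the same reduced-expression computation as in the paper's proof. The gap is in faithfulness. Everything there hinges on the existence of the retraction $G\colon\mathcal{N}\to\widetilde{\mathcal{U}}$, and the existence of $G$ hinges on verifying that, for each braid relation of $\mathcal{N}$ based at an object $W_1$, the two collapsed words (delete the arrows whose letter lies in its source) become equal in $\widetilde{\mathcal{U}}$. But this is precisely where the whole content of the lemma is concentrated: when no reflection of $D=\langle s,t\rangle$ lies in $W_1$ the relation is listed and there is nothing to prove, and in the remaining cases the two collapsed words are in general non-reduced and not literally equal, so one must \emph{derive} their equality from the listed relations alone. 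You explicitly do not do this (``which I expect to coincide'', ``the one genuinely computational step''), so the proposal omits a proof of its crucial step. That is a genuine gap, not a routine verification: an unjustified claim of this kind is exactly the sort of statement the lemma itself is asserting.

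The deferred claim is in fact true, so your approach is completable. The mechanism is a ``folding'' phenomenon: if the arrow at position $j$ of one side collapses, then for each $i$ the arrow at position $j+i$ is the inverse morphism of the arrow at position $j-i$ (same letter, objects folded back on themselves), so each collapsed side contracts, using only listed involution relations, to a common normal form; one checks this for $\langle T_D\cap W_1\rangle$ of rank one and dihedral separately. (Note the minor imprecision in your reduction: $D\cap W_1$ need not be a reflection subgroup of $D$; the relevant group is $\langle T_D\cap W_1\rangle$, whose reflection set is exactly $T_D\cap W_1$, which is all that the membership conditions see.) Carrying this out is comparable in substance to the paper's own route, which is different from yours: the paper does not build a retraction, but instead proves that a morphism of $\mathcal{N}$ lies in $\mathcal{U}$ if and only if \emph{every} arrow in its reduced-expression factorization is a $\mathcal{U}$-generator, and then invokes the presentation of $\mathcal{N}$ --- the point being that, by Matsumoto--Tits, equalities of words are generated by braid moves between reduced expressions together with cancellations, and the criterion shows such moves never leave the set of $\mathcal{U}$-generators. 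Either way, some word-problem-type argument must actually be written down; your proposal stops just short of it.
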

\begin{proof}
Suppose that $W_2\overset{w}{\longleftarrow}W_1$ is a morphism of $\mathcal{N}$ and let $w=s_1s_2\cdots s_k$ be a reduced expression for $w$. Then $W_2\overset{w}{\longleftarrow}W_1$ equals the following composition of generators of $\mathcal{N}$:
\[
W_2\overset{s_1}{\longleftarrow}s_1W_2s_1\overset{s_2}{\longleftarrow}s_2s_1W_2s_1s_2\overset{s_3}{\longleftarrow}\cdots
\overset{s_k}{\longleftarrow}W_1.
\]
Since $N(w)=\{s_1 s_2\cdots s_{i-1} s_i s_{i-1} \cdots s_2 s_1;1\leq i\leq k\}$, we have
\[
N(w)\cap W_2=\emptyset\ \Longleftrightarrow\ s_i\notin s_{i-1}\cdots s_1W_2s_1\cdots s_{i-1},\ 1\leq i\leq k.
\]  
Thus $W_2\overset{w}{\longleftarrow}W_1$ is a morphism of $\mathcal{U}$ if and only if all the generators involved in the above expression belong to $\mathcal{U}$. The claim now follows from Lemma~\ref{lem:presentation1}. 
\end{proof}

\begin{example} \label{ex:d_4-groupoid}
Let $(W,S)$ be of type $D_4$ and define $W_0=\langle s_1, s_2, s_4, t\rangle$ as in Example~\ref{ex:d_4}, with $t=s_3s_1s_2s_4s_3s_4s_2s_1s_3$. The other reflection subgroups in the conjugacy class of $W_0$ are
\[
\begin{split}
W_1&=s_3W_0s_3=\langle s_1s_3s_1,s_2s_3s_2,s_4s_3s_4,s_1s_2s_4s_3s_4s_2s_1\rangle\ \text{and}\\
W_2&=s_1W_1s_1=s_2W_1s_2=s_4W_1s_4=\langle s_3, s_1s_2s_3s_2s_1, s_1s_4s_3s_4s_1, s_2s_4s_3s_4s_2\rangle.
\end{split}
\]

As a consequence of Lemma~\ref{lem:presentation2}, the connected component of $\mathcal{U}$ with objects $W_0,W_1,W_2$ is completely encoded by the multi-graph where the (bi-directional) edges represent the conjugations by elements of $S$ not belonging to the subgroups involved:
\begin{center}
\begin{tikzpicture}
\draw [<->] (2.35,0.25) -- (3.65,0.25); 
\draw [<->] (2.35,0) -- (3.65,0); 
\draw [<->] (2.35,-0.25) -- (3.65,-0.25); 
\draw [<->] (0.35,0) -- (1.65,0);

\fill[color=white] (0,0) circle (.3);
\fill[color=white] (2,0) circle (.3);
\fill[color=white] (4,0) circle (.3);

\draw (0,0) node {$W_0$};
\draw (2,0) node {$W_1$};
\draw (4,0) node {$W_2$};
\draw (1,0.10) node {\tiny $s_3$};
\draw (3,0.10) node {\tiny $s_2$};
\draw (3,0.35) node {\tiny $s_1$};
\draw (3,-0.15) node {\tiny $s_4$};

\end{tikzpicture}
\end{center} 
That is, the morphisms in $\mathcal{U}$ between these objects are equivalence classes of directed walks in this multi-graph, where the equivalence relation on walks is that given by the Coxeter relations. For example, the three non-identity elements of $U_0$, namely $s_3s_1s_2s_3$, $s_3s_2s_4s_3$, and $s_3s_1s_4s_3$, are walks from $W_0$ to $W_2$ and back again. The Coxeter relations imply that, for example, walking from $W_1$ to $W_2$ along the $s_1$ edge and then walking back along the $s_2$ edge is equivalent to walking first along the $s_2$ edge and then back along the $s_1$ edge; and walking from $W_1$ to $W_2$ and back along the same edge is equivalent to not moving.   
\end{example}

\begin{remark}\label{rem:brinkhowlett}
One can give an alternative description of the groupoid $\mathcal{U}$ in terms of the root system $\Phi$ of $(W,S)$, as in Remark~\ref{rem:roots}. The map sending $W_0$ to the subset $\Pi_0\subseteq\Phi^+$ is a bijection between reflection subgroups of $W$ and subsets of $\Phi^+$ satisfying the condition of~\cite[Theorem 4.4]{DYER_SUBGROUPS}; call these the \emph{simple subsets} of $\Phi^+$. For $w\in W$, we have $N(w^{-1})\cap W_1=\emptyset$ if and only if $w(\Pi_1)$ is a subset of $\Phi^+$, in which case it is clearly a simple subset. Hence $\mathcal{U}$ is isomorphic to the groupoid $\mathcal{U}'$ where the objects are simple subsets of $\Phi^+$ and $$\Hom_{\mathcal{U}'}(\Pi_1,\Pi_2):=\{w\in W;w(\Pi_1)=\Pi_2\}.$$ Note that, under this isomorphism, the generators of $\mathcal{U}$ described in Lemma~\ref{lem:presentation2} correspond to the morphisms $s(\Pi_1)\overset{s}{\longleftarrow}\Pi_1$ where $s\in S$ and $\alpha_s\notin\Pi_1$.

In~\cite{BRINKHOWLETT}, Brink and Howlett effectively study the full sub-groupoid $\mathcal{U}''$ of $\mathcal{U}'$ where the objects are the subsets of $\Pi$, all of which are simple in the above sense; the corresponding reflection subgroups are the standard parabolic subgroups of $W$. In fact, they restrict attention to the connected component $G(J,W)$ of $\mathcal{U}''$ containing a fixed subset $J\subseteq\Pi$, and give a presentation of $G(J,W)$ in~\cite[Theorem A]{BRINKHOWLETT} and a `semidirect product decomposition' of $G(J,W)$ in~\cite[Theorem B]{BRINKHOWLETT}. Since $G(J,W)$ has fewer objects in general than the connected component of $\mathcal{U}'$ which contains it, their presentation is both more complicated than that given in Lemma~\ref{lem:presentation2}, and more useful as a way of describing the endomorphism groups $U_0$. However, their results say nothing about the connected components of $\mathcal{U}$ consisting of non-parabolic reflection subgroups.     
\end{remark}

\subsection{Artin groupoids}
As in the previous section, let $B$ denote the Artin group associated to $(W,S)$, and let $\pi:B\onto W$ be the projection with its non-homomorphic section $w\mapsto\underline{w}$. Define a groupoid $\widehat{\mathcal{B}}$ with the same set of objects as $\mathcal{N}$, but with
$$
\Hom_{\widehat{\mathcal{B}}}(W_1,W_2):=\{\beta\in B; \pi(\beta)W_1\pi(\beta)^{-1}=W_2\}.
$$
For any object $W_0$ we have $\End_{\widehat{\mathcal{B}}}(W_0)=\pi^{-1}(N_W(W_0))=\widehat{B}_0$. It is easy to see that the subgroups $K_0\vartriangleleft \widehat{B}_0$ defined in the previous section constitute a compatible collection, so that we can form the quotient groupoid $\widetilde{\mathcal{B}}:=\widehat{\mathcal{B}}/K_\bullet$ with $\End_{\widetilde{\mathcal{B}}}(W_0)=\widetilde{B}_0$.  

The projection $\pi:B\onto W$ induces a full functor $\widehat{\mathcal{B}}\onto\mathcal{N}$. Since $K_0<\ker(\pi)$, this functor factors through a full functor $\widetilde{\Pi}:\widetilde{\mathcal{B}}\onto\mathcal{N}$. By definition, $\widetilde{\Pi}$ is the identity on objects and induces the projections $\tilde{\pi}_0:\widetilde{B}_0\onto N_W(W_0)$ on endomorphism groups.

The point now is that $\mathcal{U}$ can be embedded in $\widetilde{\mathcal{B}}$ by taking positive lifts:

\begin{theorem} \label{thm:splitCox-groupoid}
There is a faithful functor $\Psi:\mathcal{U}\into\widetilde{\mathcal{B}}$ which is the identity on objects and maps each morphism $w\in\Hom_{\mathcal{U}}(W_1,W_2)$ to $\underline{w}K_1\in\Hom_{\widetilde{\mathcal{B}}}(W_1,W_2)$. We have the following commutative diagram of functors.
$$
\xymatrix{
 & \widetilde{\mathcal{B}} \ar@{->>}[d]^{\widetilde{\Pi}} \\
 \mathcal{U} \ar@{-->}[ur]^{\Psi}\ar@{^{(}->}[r] \ar[dr]_{\sim} & \mathcal{N}\ar@{->>}[d]\\
 & \overline{\mathcal{N}}}
$$
\end{theorem}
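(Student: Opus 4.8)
The plan is to establish the three claims packaged in the theorem: that $\Psi$ is a well-defined functor, that it is faithful, and that the displayed triangle of functors commutes. Well-definedness on objects is built into the statement ($\Psi$ is the identity there), and well-definedness on morphisms is immediate: if $w\in\Hom_{\mathcal{U}}(W_1,W_2)$ then in particular $wW_1w^{-1}=W_2$, so $\underline{w}\in\Hom_{\widehat{\mathcal{B}}}(W_1,W_2)$ and hence $\underline{w}K_1\in\Hom_{\widetilde{\mathcal{B}}}(W_1,W_2)$; moreover $\Psi(\mathrm{id}_{W_1})=\underline{1}K_1=\mathrm{id}_{W_1}$. The one substantive point is compatibility with composition, and I expect this to be the only real work.

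For composition, I would take a composable pair $u\in\Hom_{\mathcal{U}}(W_1,W_2)$ and $v\in\Hom_{\mathcal{U}}(W_2,W_3)$, whose composite is represented by $vu\in\Hom_{\mathcal{U}}(W_1,W_3)$. Since composition in the quotient groupoid $\widetilde{\mathcal{B}}=\widehat{\mathcal{B}}/K_\bullet$ is induced by multiplication in $B$, the task is to show that $\underline{v}\,\underline{u}$ and $\underline{vu}$ represent the same morphism of $\widetilde{\mathcal{B}}$; phrasing the equivalence relation on the target side, this means $\underline{v}\,\underline{u}\,(\underline{vu})^{-1}\in K_3$. To see this I would concatenate a reduced expression for $v$ with one for $u$ and apply Lemma~\ref{lem:braid-decomposition}, which writes $\underline{v}\,\underline{u}$ as a product of ``correction factors'' of the form $\sigma_1\cdots\sigma_{a_i-1}\sigma_{a_i}^2\sigma_{a_i-1}^{-1}\cdots\sigma_1^{-1}$ times $\underline{vu}$. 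By Lemma~\ref{lem:concatenation} the reflection $\pi(\sigma_1\cdots\sigma_{a_i}\cdots\sigma_1)$ attached to each such factor lies in $N(v)\cap vN(u)v^{-1}$, hence in particular in $N(v)$. The key observation is that $v\in\Hom_{\mathcal{U}}(W_2,W_3)$ means exactly $N(v)\cap W_3=\emptyset$, so none of these reflections belongs to $W_3$; by the definition of $K_3$ each correction factor is therefore one of its generators, and their product lies in $K_3$. Thus $\Psi(v)\circ\Psi(u)=\underline{v}\,\underline{u}\,K_1=\underline{vu}\,K_1=\Psi(vu)$, as required.

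Faithfulness and the commutativity of the diagram can then be read off together. Because $\pi(\underline{w})=w$ and $K_1\subseteq\ker\pi$, the functor $\widetilde{\Pi}$ sends $\underline{w}K_1$ to $w$, so $\widetilde{\Pi}\circ\Psi$ is precisely the inclusion $\mathcal{U}\hookrightarrow\mathcal{N}$; this is the commutativity of the upper triangle, while the lower triangle $\mathcal{U}\hookrightarrow\mathcal{N}\twoheadrightarrow\overline{\mathcal{N}}$ being the isomorphism $\mathcal{U}\overset{\sim}{\to}\overline{\mathcal{N}}$ is exactly Lemma~\ref{lem:semi-direct-groupoid}. Since the inclusion $\mathcal{U}\hookrightarrow\mathcal{N}$ is injective on each $\Hom$-set, the identity $\widetilde{\Pi}\circ\Psi=(\text{inclusion})$ forces $\Psi$ to be injective on $\Hom$-sets, i.e.\ faithful. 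The main obstacle is genuinely the composition step, and the clean route is to phrase the equivalence on the target side (via $K_3$): there the cancelled reflections are naturally controlled as inversions of $v$, whereas a source-side computation (via $K_1$) would require conjugating the correction factors by $\underline{vu}$, which does not lie in $\widehat{B}_1$ in general and so cannot exploit the normality of $K_1$ there.
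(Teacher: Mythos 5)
Your proof is correct, but it takes a genuinely different route from the paper's. The paper proves this theorem by invoking the presentation of $\mathcal{U}$ given in Lemma~\ref{lem:presentation2}: one only has to define $\Psi$ on the generating morphisms $sW_1s\overset{s}{\longleftarrow}W_1$ (with $s\notin W_1$), sending them to $sW_1s\overset{\sigma K_1}{\longleftarrow}W_1$, and check that these images satisfy the Coxeter relations; the quadratic relation amounts to $\sigma^2\in K_1$, which holds precisely because $s\notin W_1$, and the braid-type relations hold because the corresponding braid relations already hold in $B$. The formula $\Psi(w)=\underline{w}K_1$ then follows by factoring any morphism of $\mathcal{U}$ along a reduced expression, and the two triangles are handled exactly as you handle them. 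You instead verify functoriality of $w\mapsto\underline{w}K_1$ directly, by running the concatenation argument of Lemmas~\ref{lem:braid-decomposition} and~\ref{lem:concatenation}; this is in effect a groupoid-refined version of Proposition~\ref{prop:lifting-hom}, where the hypothesis is replaced by the one-sided condition $N(v)\cap W_3=\emptyset$ supplied by $v$ being a morphism of $\mathcal{U}$ into $W_3$, and where the congruence is correctly read on the target side so that the correction factors are literally generators of $K_3$ (your closing remark about why the source-side computation via $K_1$ would be awkward is well taken). Both arguments are sound. What the paper's route buys is logical independence from the computations of Section~\ref{sec:coxeter}: since Section~\ref{sec:groupoids} is advertised as an alternative, more conceptual proof of the splitting, the authors deliberately reduce everything to checking relations on generators, which is nearly trivial. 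What your route buys is economy at the groupoid level: you never need the presentation of $\mathcal{U}$ from Lemma~\ref{lem:presentation2}, only its definition; the cost is that you re-import the reduced-expression machinery of Section~\ref{sec:coxeter}, so your argument does not provide a second proof of Theorem~\ref{thm:splitfiniteCox} that is independent of the first.
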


\begin{proof}
Recall the presentation of $\mathcal{U}$ given in Lemma~\ref{lem:presentation2}. We first want to show that there exists a functor $\Psi:\mathcal{U}\to\widetilde{\mathcal{B}}$ which is the identity on objects and maps each generating morphism $sW_1s\overset{s}{\longleftarrow} W_1$ of $\mathcal{U}$ to the morphism $sW_1s\overset{\sigma K_1}{\longleftarrow} W_1$ of $\widetilde{\mathcal{B}}$, where $\sigma$ is the Artin generator of $B$ corresponding to $s\in S$. We need only check that these morphisms in $\widetilde{\mathcal{B}}$ satisfy the required Coxeter relations. The relation
\[
(W_1\overset{\sigma (\sigma K_1\sigma^{-1})}{\longleftarrow} sW_1s\overset{\sigma K_1}{\longleftarrow} W_1)=\mathrm{id}_{W_1}
\] 

is equivalent to $\sigma^2\in K_1$, which holds because $s\notin W_1$ by assumption. The braid-type relations hold simply because the analogous braid relations hold in $B$.

Now it is clear that the functor $\Psi:\mathcal{U}\to\widetilde{\mathcal{B}}$ defined in this way maps each morphism $W_2\overset{w}{\longleftarrow} W_1$ of $\mathcal{U}$ to the morphism $W_2\overset{\underline{w}K_1}{\longleftarrow} W_1$ of $\widetilde{\mathcal{B}}$. The top commutative triangle follows, and implies that $\Psi$ is faithful. The bottom commutative triangle is from Lemma~\ref{lem:semi-direct-groupoid}.
\end{proof}

Restricting the functors in Theorem~\ref{thm:splitCox-groupoid} to the endomorphism groups at a particular object $W_0$, we deduce the following commutative diagram of groups.
\begin{equation}
\xymatrix{
 & \widetilde{B}_0 \ar@{->>}[d]^{\tilde{\pi}_0} \\
 U_0 \ar@{-->}[ur]^{\psi}\ar@{^{(}->}[r] \ar[dr]_{\sim} & N_W(W_0)\ar@{->>}[d]\\
 & N_W(W_0)/W_0}
\end{equation}
Thus, we have a new proof of the existence of the homomorphism $\psi:U_0\into\widetilde{B}_0$ which splits the short exact sequence~\eqref{eq:ses} as in Theorem~\ref{thm:splitfiniteCox} (still under the assumption~\eqref{eq:assumption}, so that this short exact sequence exists).

\begin{remark}
As noted in Remark~\ref{rem:brinkhowlett}, a particularly interesting sub-groupoid of $\mathcal{U}$ is the Brink--Howlett groupoid obtained by restricting to standard parabolic subgroups of $W$. As a consequence of Theorem~\ref{thm:splitCox-groupoid} we have an embedding of this Brink--Howlett groupoid in $\widetilde{\mathcal{B}}$ also.   
\end{remark}

\section{An example of splitting for the group $G(d,1,n)$}
\label{sec:gd1n}

In this section, we prove that the short exact sequence~\eqref{eq:ses} splits when $W$ is the complex reflection group $G(d,1,n)$ and $W_0$ is the parabolic subgroup $G(d,1,k)$ for $1\leq k\leq n-1$. We will see in Example~\ref{ex:g312} that this splitting does not hold for arbitrary parabolic subgroups $W_0<G(d,1,n)$. 

\subsection{Preliminaries}\label{notation_gd1n}

Fix $n\geq 1$, $d\geq 2$. Recall that the group $W=G(d,1,n)$ has a Coxeter-like presentation with generating set $S=\{t_1, s_1, \cdots, s_{n-1}\}$, with relations given by the type $B_{n}$ braid relations (with $t_1 s_1 t_1 s_1=s_1 t_1 s_1 t_1$), the relation $t_1^d=1$, and the relations $s_i^2=1$ for all $1\leq i \leq n-1$. This presentation is encapsulated in the following diagram from~\cite{BMR}. 
\begin{center}
\begin{tikzpicture}
\draw [-] (0.3,0.05) -- (1.7,0.05);
\draw [-] (0.3,-0.05) -- (1.7,-0.05);
\draw [-] (2.3,0) -- (3.7,0);
\draw [-] (4.3,0) -- (5.7,0); 
\draw [-] (6.3,0) -- (7.7,0);
\draw (0,0) circle (.3);
\draw (2,0) circle (.3);
\draw (4,0) circle (.3);
\draw (8,0) circle (.3);
\draw (0,0) node {$d$};
\draw (2,0) node {$2$};
\draw (4,0) node {$2$};
\draw (6,0) node {$\cdots$};
\draw (8,0) node {$2$};
\draw (0,-.6) node {$t_1$};
\draw (2,-.6) node {$s_1$};
\draw (4,-.6) node {$s_2$};
\draw (8,-.6) node {$s_{n-1}$};
\end{tikzpicture}
\end{center} 

In the standard realization of $W$ as the group of monomial $n\times n$ matrices whose nonzero entries are $d$th roots of unity, $t_1$ is the diagonal matrix with $\exp(2\pi \sqrt{-1}/d)$ in the first diagonal entry and $1$ in the other diagonal entries, and $s_1, s_2, \cdots, s_{n-1}$ are the standard permutation matrices for the adjacent transpositions.  

Note that when $d=2$ we recover the Coxeter group of type $B_n$. By~\cite[Theorem 3.6]{BMR}, the braid group $B$ of $W$ can be identified with the Artin group of type $B_n$, whatever the value of $d$. We denote its standard Artin generating set by $\Sigma=\{\tau_1, \sigma_1, \cdots, \sigma_{n-1}\}$, where $\pi(\tau_1)=t_1$ and $\pi(\sigma_i)=s_i$; these Artin generators are braided reflections in the sense of Section~\ref{sec:definitions}. Then $P=\ker(\pi:B\onto W)$ is generated by elements of two types: the first type of generator is $\beta\tau_1^d\beta^{-1}$ for some $\beta\in B$, which topologically is a meridian around the hyperplane for the order-$d$ reflection $\pi(\beta\tau_1\beta^{-1})$, and the second type of generator is $\beta\sigma_i^2\beta^{-1}$ for $\beta\in B$ and $1\leq i\leq n-1$, which topologically is a meridian around the hyperplane for the order-$2$ reflection $\pi(\beta\sigma_i\beta^{-1})$.

A major difference between the $d=2$ Coxeter case and the $d\geq 3$ non-Coxeter case is that in the latter case there is no natural way to define a positive lifting map $W\to B$. However, consider the reflections $t_i:=s_{i-1} \cdots s_1 t_1 s_1 \cdots s_{i-1}$ for $2\leq i\leq n$. In matrix terms, $t_i$ is the diagonal matrix with $\exp(2\pi \sqrt{-1}/d)$ in the $i$th diagonal entry and $1$ in the other diagonal entries. In the $d=2$ case, $s_{i-1} \cdots s_1 t_1 s_1 \cdots s_{i-1}$ is a reduced expression, so the positive lift of $t_i$ is $\sigma_{i-1} \cdots \sigma_1 \tau_1 \sigma_1 \cdots \sigma_{i-1}$. This motivates defining the `positive lift' $\tau_i:=\sigma_{i-1} \cdots \sigma_1 \tau_1 \sigma_1 \cdots \sigma_{i-1}$ for arbitrary $d$.

\subsection{A direct product decomposition}\label{no_type_a}

Now fix $1\leq k\leq n-1$ and let $S_0=\{t_1, s_1, \cdots, s_{k-1}\}$ and $W_0=\langle S_0\rangle\cong G(d,1,k)$. From the matrix realization it is easy to see that 
\begin{equation} \label{eq:direct}
N_W(W_0)=W_0\times U_0
\end{equation}
where $U_0:=\langle S_U\rangle\cong G(d,1,n-k)$ for $S_U:= \{ t_{k+1}, s_{k+1}, s_{k+2}, \dots, s_{n-1}\}$. 
The notation $U_0$ is intended to be reminiscent of the Coxeter case, and indeed when $d=2$ this subgroup $U_0$ does coincide with that in Lemma~\ref{lem:semi-direct}; the semi-direct product happens to be direct in this case. Since both $W_0$ and $U_0$ are groups of the same form as $W$, the above comments about $W$ apply also to them with the obvious modifications.

The group $K_0=\ker(P\onto P_0)$ is generated by those generators $\beta\tau_1^d\beta^{-1}$ and $\beta\sigma_i^2\beta^{-1}$ of $P$ for which the corresponding hyperplane is not in $\mathcal{A}_0$, i.e.\ for which the reflection $\pi(\beta\tau_1\beta^{-1})$ or $\pi(\beta\sigma_i\beta^{-1})$ does not belong to $W_0$.

Since $W_0$ is a parabolic subgroup of $W$, we have an injective homomorphism $B_0\into B$ as in~\eqref{eq:parabolic} whose image is a complement to $K_0$ in $\pi^{-1}(W_0)$. In this case, the homorphism is the obvious one from the Artin group of type $B_k$ to the Artin group of type $B_n$, sending $\tau_1$ to $\tau_1$ and $\sigma_j$ to $\sigma_j$ for $1\leq j\leq k-1$. So the inclusion $B_0\into\widetilde{B}_0$ maps $\tau_1$ to $\tau_1K_0$ and $\sigma_j$ to $\sigma_jK_0$ for $1\leq j\leq k-1$. 

We can now prove an analogue of Theorem~\ref{thm:splitfiniteCox} in the present case, where the splitting is still in some sense given by taking positive lifts.

\begin{proposition}\label{splittings_gd1n}
With $W=G(d,1,n)$ and $W_0=G(d,1,k)$ as above, the splitting of the short exact sequence~\eqref{eq:ses-triv} given by~\eqref{eq:direct} lifts to a splitting of the short exact sequence~\eqref{eq:ses}. Namely, after identifying $N_W(W_0)/W_0$ with $U_0$, the splitting of~\eqref{eq:ses} is an injective group homomorphism $\psi:U_0\into\widetilde{B}_0$ which is defined on the generating set $S_U$ by 
\[ \psi(t_{k+1})=\tau_{k+1} K_0\ \text{and}\ \psi(s_i)= \sigma_i K_0,\ \text{for}\ k+1\leq i\leq n-1. \] 
We have a direct product decomposition
$$
\widetilde{B}_0 \cong B_0 \times \psi(U_0).
$$

\end{proposition}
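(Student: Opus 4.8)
The plan is to prove Proposition~\ref{splittings_gd1n} by exhibiting the map $\psi:U_0\to\widetilde{B}_0$ explicitly on generators, checking it is a well-defined homomorphism, and then verifying that together with the inclusion $B_0\into\widetilde{B}_0$ it gives a direct product decomposition. Since we are told $N_W(W_0)=W_0\times U_0$ is in fact a \emph{direct} product (not merely semidirect), the key extra feature to establish beyond a mere splitting is that the lifted subgroups $B_0$ and $\psi(U_0)$ actually commute inside $\widetilde{B}_0$; this is what upgrades the conclusion from $\widetilde{B}_0=B_0\rtimes\psi(U_0)$ to $\widetilde{B}_0\cong B_0\times\psi(U_0)$.

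First I would define $\psi$ on the generating set $S_U=\{t_{k+1},s_{k+1},\dots,s_{n-1}\}$ by the stated formulas $\psi(t_{k+1})=\tau_{k+1}K_0$ and $\psi(s_i)=\sigma_i K_0$, where $\tau_{k+1}=\sigma_k\cdots\sigma_1\tau_1\sigma_1\cdots\sigma_k$ is the positive lift defined in Section~\ref{notation_gd1n}. To see this extends to a homomorphism, note that $U_0\cong G(d,1,n-k)$ has a presentation of type $B_{n-k}$ (together with $t_{k+1}^d=1$), so I must check that the images satisfy the braid relations among $\tau_{k+1}K_0,\sigma_{k+1}K_0,\dots,\sigma_{n-1}K_0$ and, crucially, the order-$d$ relation $\psi(t_{k+1})^d=(\tau_{k+1}K_0)^d=K_0$. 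The braid relations hold automatically because the corresponding relations hold in $B$ itself (the elements $\tau_{k+1},\sigma_{k+1},\dots,\sigma_{n-1}$ generate a standard parabolic Artin subgroup of type $B_{n-k}$). The relation $\tau_{k+1}^d\in K_0$ holds because $\tau_{k+1}^d$ is a meridian around the hyperplane of the order-$d$ reflection $t_{k+1}=\pi(\tau_{k+1})$, which does not lie in $W_0$, so this generator of $P$ lies in $K_0$ by the definition of $K_0$ recalled in Section~\ref{no_type_a}.

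Next I would establish the two facts that give the direct product. (i) \textbf{Commuting.} The generators of $\psi(U_0)$ and the generators $\tau_1 K_0,\sigma_1K_0,\dots,\sigma_{k-1}K_0$ of the image of $B_0$ must commute in $\widetilde{B}_0$. For most pairs this is immediate from disjoint-support braid relations in $B$ (generators $\sigma_i,\sigma_j$ with $|i-j|\ge 2$ commute, and $\tau_1$ commutes with $\sigma_j$ for $j\ge 2$). The only non-obvious pairs are those involving $\tau_{k+1}$ against $\tau_1,\sigma_1,\dots,\sigma_{k-1}$, and $\sigma_{k+1}$ against $\sigma_{k-1}$; here one reduces to the group-level fact that $t_{k+1}$ centralizes $W_0$ (from~\eqref{eq:direct}) and checks the lift using the type-$B_n$ braid relations, possibly expanding $\tau_{k+1}=\sigma_k\cdots\sigma_1\tau_1\sigma_1\cdots\sigma_k$ and computing modulo $K_0$. (ii) \textbf{Generation and triviality of intersection.} Since $\pi(B_0)=W_0$ and $\widetilde{\pi}_0(\psi(U_0))=U_0$ and $N_W(W_0)=W_0\times U_0$, the subgroups $B_0$ and $\psi(U_0)$ together surject onto $N_W(W_0)$; combined with the short exact sequence~\eqref{eq:ses} this gives that they generate $\widetilde{B}_0$, while $B_0\cap\psi(U_0)=\{1\}$ follows because $\psi$ splits~\eqref{eq:ses} (so $\psi(U_0)$ maps isomorphically onto the quotient $N_W(W_0)/W_0$, meeting the kernel $B_0$ trivially). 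Injectivity of $\psi$ is immediate since $\widetilde{\pi}_0\circ\psi=\mathrm{id}_{U_0}$.

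The main obstacle I anticipate is the commuting verification in step (i) for the pair $\tau_{k+1}$ and the $B_0$-generators: unlike the pure braid-relation cases, one cannot simply quote a relation in $B$, because $\tau_{k+1}$ is a long word and does \emph{not} commute with $\tau_1$ in $B$ itself. The point is that the commutator $[\tau_{k+1},\tau_1]$ (and similarly $[\tau_{k+1},\sigma_j]$ for $j<k$) lands in the pure braid group $P$, and one must show it lies in $K_0$ specifically. I expect this is where the structure of $K_0$ as normally generated by meridians around hyperplanes outside $\mathcal{A}_0$ does the real work: the relevant commutators are products of such meridians, so they die modulo $K_0$. Carefully identifying these commutators as $K_0$-elements — most cleanly by computing the images in $P/K_0\cong P_0$ and checking they are trivial there — is the crux of the argument.
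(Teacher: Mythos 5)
Your overall architecture (define $\psi$ on the generators, check the relations of $G(d,1,n-k)$, deduce the splitting, then prove commutation to upgrade to a direct product) is the same as the paper's, but the two computational cruxes are not correct as you state them. The more serious one is the order relation. You justify $\tau_{k+1}^d\in K_0$ by asserting that $\tau_{k+1}^d$ \emph{is} a meridian around the hyperplane of $t_{k+1}$, hence a generator of $K_0$. It is not. With the paper's definition $\tau_{k+1}=\sigma_k\cdots\sigma_1\tau_1\sigma_1\cdots\sigma_k$ (positive letters on both sides, not a conjugate), $\tau_{k+1}$ differs from the braided reflection $\sigma_k\cdots\sigma_1\tau_1\sigma_1^{-1}\cdots\sigma_k^{-1}$ by a nontrivial pure-braid factor, and $\tau_{k+1}^d$ is not a conjugate of $\tau_1^d$. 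Concretely, every meridian around that hyperplane is a $P$-conjugate of $(\sigma_k\cdots\sigma_1)\tau_1^d(\sigma_k\cdots\sigma_1)^{-1}$ and therefore maps to $d[\tau_1]$ in the abelianization $B^{\mathrm{ab}}\cong\Z^2$ (free on the class of $\tau_1$ and the common class $[\sigma]$ of the $\sigma_i$), whereas $\tau_{k+1}^d$ maps to $d[\tau_1]+2kd[\sigma]$. So $\tau_{k+1}^d$ is not a meridian, and your appeal to the definition of $K_0$ fails. The membership $\tau_{k+1}^d\in K_0$ is true, but proving it is exactly where the paper has to work: it uses the factorization~\eqref{eq:factorization}, writing $\tau_{k+1}$ as the genuine braided reflection $\sigma_k\cdots\sigma_1\tau_1\sigma_1^{-1}\cdots\sigma_k^{-1}$ times meridians around the hyperplanes of the order-$2$ reflections $s_k\cdots s_i\cdots s_k\notin W_0$; those factors lie in $K_0$, so modulo $K_0$ one may replace $\tau_{k+1}$ by the braided reflection, whose $d$-th power is a meridian around a hyperplane outside $\mathcal{A}_0$. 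This step is missing from your proposal.

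Your second key step is also misdiagnosed. The ``main obstacle'' you anticipate rests on a false premise: you claim $\tau_{k+1}$ does \emph{not} commute with $\tau_1$ in $B$ itself, so that commutation can only hold modulo $K_0$ and must be verified by a computation in $P/K_0\cong P_0$ --- a computation you then never carry out. In fact $\tau_1\tau_{k+1}=\tau_{k+1}\tau_1$ holds in $B$: for $k=1$ it is literally the defining relation $\tau_1\sigma_1\tau_1\sigma_1=\sigma_1\tau_1\sigma_1\tau_1$ read as $\tau_1(\sigma_1\tau_1\sigma_1)=(\sigma_1\tau_1\sigma_1)\tau_1$, and in general the paper proves it (together with~\eqref{eq:artin-relations} and $\sigma_i\tau_{k+1}=\tau_{k+1}\sigma_i$ for $i\le k-1$) by noting these identities are statements in the type $B_n$ Artin group independent of $d$, passing to the $d=2$ Coxeter case, and observing that both sides are then positive lifts of one element of $W$ with additive lengths, cf.~\eqref{eq:coxeter-relations}. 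The same criticism applies to your claim that the braid relations among $\tau_{k+1},\sigma_{k+1},\dots,\sigma_{n-1}$ hold ``automatically'' because they generate a standard parabolic Artin subgroup of type $B_{n-k}$: this subgroup is not standard parabolic ($\tau_{k+1}$ is not a standard generator), and that these elements satisfy the type $B_{n-k}$ Artin relations is precisely what must be proved; the $d=2$ positive-lift argument is again the missing ingredient. The remainder of your outline (generation, trivial intersection, and injectivity via $\widetilde{\pi}_0\circ\psi=\mathrm{id}$) is correct and matches the paper, but as written the two relations that actually require proof are justified by a false claim and by an unexecuted plan, respectively.
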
 

\begin{proof}
We first want to show that there exists a group homomorphism $\psi:U_0\to\widetilde{B}_0$ which has the stated definition on the generators. For this, we must show that the elements $\tau_{k+1} K_0, \sigma_{k+1}K_0,\cdots,\sigma_{n-1}K_0$ of $\widetilde{B}_0$ satisfy the relations in the Coxeter-like presentation of $U_0\cong G(d,1,n-k)$ analogous to that given above for $W$. 

For the braid relations in this presentation, we can in fact see that they hold already for the elements $\tau_{k+1},\sigma_{k+1},\cdots,\sigma_{n-1}$ in the type-$B_n$ Artin group $B$. This is clear for the braid relations not involving $\tau_{k+1}$, since those are themselves relations in the Artin presentation of $B$. Hence we only need to check that 
\begin{equation} \label{eq:artin-relations}
\begin{split}
\tau_{k+1}\sigma_i&=\sigma_i\tau_{k+1},\ \text{for}\ k+2\leq i\leq n,\ \text{and}\\
\tau_{k+1} \sigma_{k+1} \tau_{k+1} \sigma_{k+1}&=\sigma_{k+1} \tau_{k+1} \sigma_{k+1} \tau_{k+1}.
\end{split}
\end{equation} 
Note that the truth of~\eqref{eq:artin-relations} is independent of $d$, so we can temporarily assume that we are in the $d=2$ Coxeter case. Then~\eqref{eq:artin-relations} follows from the observations that
\begin{equation} \label{eq:coxeter-relations}
\begin{split}
t_{k+1}s_i&=s_it_{k+1},\ \text{for}\ k+2\leq i\leq n,\\
t_{k+1}s_{k+1}t_{k+1}s_{k+1}&=
s_{k+1}t_{k+1} s_{k+1}t_{k+1},
\end{split}
\end{equation}
and moreover that the lengths add in each of the expressions in~\eqref{eq:coxeter-relations}.

It remains to show that the order relations hold in $\widetilde{B}_0$. For $k+1\leq i\leq n-1$ we have $s_i\notin W_0$, so $\sigma_i^2\in K_0$ which means that $(\sigma_i K_0)^2=1K_0$ as required. We also need to show that $(\tau_{k+1}K_0)^d=1K_0$. Note that
\begin{equation} \label{eq:factorization}
\tau_{k+1}=(\sigma_{k} \cdots \sigma_{1} \tau_1 \sigma_{1}^{-1} \cdots \sigma_{k}^{-1})(\sigma_{k} \cdots \sigma_{2} \sigma_{1}^2 \sigma_{2}^{-1} \cdots \sigma_{k}^{-1})(\sigma_{k} \cdots \sigma_{3} \sigma_{2}^2 \sigma_{3}^{-1} \cdots \sigma_{k}^{-1})\cdots \sigma_{k}^2.
\end{equation}
Now all the factors on the right-hand side of~\eqref{eq:factorization} except the first factor belong to $K_0$, since for all $1\leq i \leq k$, the reflection $s_k\cdots s_i \cdots s_k$ has order $2$ and is not in $W_0$. Hence 
\begin{equation}
(\tau_{k+1} K_0)^d=(\sigma_{k} \cdots \sigma_{1} \tau_1 \sigma_{1}^{-1} \cdots \sigma_{k}^{-1})^d K_0=\sigma_{k} \cdots \sigma_{1} \tau_1^d \sigma_{1}^{-1} \cdots \sigma_{k}^{-1}=1K_0,
\end{equation} 
where the last equation holds since $s_k\cdots s_1 t_1 s_1 \cdots s_k=t_{k+1}$ is a reflection of order $d$ which is not in $W_0$. This concludes the proof that the homomorphism $\psi:U_0\to\widetilde{B}_0$ exists. 

Since $\tilde{\pi}_0(\psi(u))=u$ holds when $u$ is one of the generators of $U_0$, it holds for all $u\in U_0$. Hence $\psi:U_0\into\widetilde{B}_0$ is injective and is a splitting of~\eqref{eq:ses}.

Finally, to show that the semidirect product $B_0\rtimes\psi(U_0)$ is direct, we need to show that each of $\tau_1 K_0,\sigma_1 K_0,\cdots, \sigma_{k-1} K_0$ commutes with each of $\tau_{k+1}K_0,\sigma_{k+1}K_0,\cdots,\sigma_{n-1}K_0$ in $\widetilde{B}_0$. Since each of $\tau_1,\sigma_1,\cdots, \sigma_{k-1}$ commutes with each of $\sigma_{k+1},\cdots,\sigma_{n-1}$ as part of the Artin relations of $B$, it suffices to show that $\tau_1\tau_{k+1}=\tau_{k+1}\tau_1$ and $\sigma_i\tau_{k+1}=\tau_{k+1}\sigma_i$ for $0\leq i\leq k-1$. These equations can be proved in the same way as~\eqref{eq:artin-relations}.

\end{proof}


\section{Counter-examples in the general case}
\label{sec:counterex}

In this section we demonstrate that the short exact sequence~\eqref{eq:ses} need not split in the general setting of Section~\ref{sec:definitions}, when $W$ is a finite complex reflection group. It is notable that in some of our counter-examples the group $W$ is close to being a Coxeter group, in the sense that it is a Shephard group, or in the sense that all its reflections have order $2$; nevertheless the relationship between $W$ and its subgroup $W_0$ fails to be sufficiently like the Coxeter case. 

\subsection{Non-parabolic reflection subgroups with no complement in their normalizer}
\label{ss:no-complement}

As mentioned in the introduction, it was shown by Muraleedaran and Taylor~\cite{TAYLORNORM} that when $W_0$ is a parabolic subgroup of $W$, there is always a subgroup of $N_W(W_0)$ which is complementary to $W_0$. There are cases where $W_0$ is a non-parabolic reflection subgroup and there is no such complement, i.e.\ the short exact sequence~\eqref{eq:ses-triv} does not split. This of course rules out the splitting of~\eqref{eq:ses}.
\begin{example} \label{ex:cyclic}
Suppose that $W=\langle s\rangle$ is cyclic of order $d\geq 2$. If $e$ is a divisor of $d$ with $1<e<d$, then $W_0=\langle s^e\rangle$ is a non-parabolic reflection subgroup of $W$ of order $d/e$. If $\gcd(e,d/e)>1$, there is clearly no complement to $W_0$ in $W$.
\end{example}
One could eliminate Example~\ref{ex:cyclic} by restricting to the case when $W_0$ is a \emph{full} reflection subgroup of $W$, meaning that $W_0$ contains all the reflections in $W$ whose hyperplane is in $\mathcal{A}_0$. However, this still leaves many examples; we content ourselves with two.
\begin{example} \label{ex:g422}
Let $W$ be the rank-$2$ imprimitive irreducible reflection group $G(4,2,2)$ of order $16$, in which the reflections are the order-$2$ unitary reflections of $\C^2$ with hyperplanes defined by the linear forms $z_1,z_2,z_1+z_2,z_1-z_2,z_1+\sqrt{-1}\,z_2,z_1-\sqrt{-1}\,z_2$. Let $W_0$ be the reflection subgroup of order $4$ generated by the reflections with hyperplanes defined by $z_1,z_2$. Then $N_W(W_0)=W$, but there is no complement to $W_0$ in $W$. Indeed, if there was such a complement $U_0$, then $U_0\cap G(4,4,2)$ would be a complement to the centre in a dihedral group of order $8$. 
\end{example}
\begin{example} \label{ex:g6}
Let $W$ be the rank-$2$ primitive irreducible reflection group of order $48$ known as $G_6$ in the Shephard-Todd numbering (a Shephard group). The reflection subgroup $W'$ of $W$ generated by the six reflections of order $2$ is a copy of $G(4,2,2)$. If we let $W_0\vartriangleleft W'$ be as in the previous example, then $N_W(W_0)=W'$, so once again there is no complement to $W_0$ in $N_W(W_0)$.
\end{example}

\subsection{Central elements of braid groups}
\label{ss:central}

Even when~\eqref{eq:ses-triv} splits, there can be an obstruction to the splitting of~\eqref{eq:ses} coming from the centre $Z(B)$ of $B$. This  obstruction can be seen already in the most trivial non-parabolic example.
\begin{example} \label{ex:cyclic2}
Continue the notation of Example~\ref{ex:cyclic}, without assuming $\gcd(e,d/e)>1$. Then $\widetilde{B}_0=B=\langle\sigma\rangle$ is infinite cyclic and $B_0$ is the nontrivial subgroup $\langle\sigma^e\rangle$, so~\eqref{eq:ses} does not split, regardless of whether~\eqref{eq:ses-triv} splits.  
\end{example} 

We now show that a similar phenomenon happens more generally, including in some cases when $W_0$ is a parabolic subgroup of $W$.

We assume henceforth that $W$ is irreducible. The centre $Z(W)$ is then cyclic and acts on $\C^n$ by scalar multiplication. Let $d:=|Z(W)|$ and let $z_W$ denote the generator of $Z(W)$ which acts on $\C^n$ as multiplication by $\exp(2\pi \sqrt{-1}/d)$.

Recall from~\cite[Lemma 2.4]{BMR} that there is a canonical central element $z_P\in P=\pi_1(X,\tilde{x})$, the homotopy class of the loop $[0,1]\to X:t\mapsto \exp(2\pi \sqrt{-1}\, t)\tilde{x}$. Define $z_{P_0}\in P_0=\pi_1(X^0,\tilde{x})$ similarly, as the homotopy class of the very same loop. Then under our identification of $P_0$ with $P/K_0$, $z_{P_0}$ corresponds to $z_P K_0$.
 
Let $z_B\in B=\pi_1(X/W,[\tilde{x}]_W)$ be the homotopy class of the loop in $X/W$ which is the image of the path $[0,1]\to X:t\mapsto \exp(2\pi \sqrt{-1}\, t/d)\tilde{x}$ from $\tilde{x}$ to $z_W(\tilde{x})$. It is shown in~\cite[Lemma 2.22]{BMR} that $z_B\in Z(B)$, that $\pi(z_B)=z_W$ and that $z_B^d=z_P$. (In fact, it is known that $Z(B)=\langle z_B\rangle$, by~\cite[Theorem 2.24]{BMR} and~\cite[Theorem 12.8]{BESSISKPI1}; we will not need this.)  
 
\begin{proposition} \label{prop:central}
Suppose that $W$ is irreducible with $d=|Z(W)|$ and that $W_0$ is a reflection subgroup of $W$.
\begin{enumerate}
\item If~\eqref{eq:ses} splits, then there is an element $\gamma\in B_0$ such that $\gamma^d=z_{P_0}$.
\item If $N_W(W_0)=W_0\times Z(W)$, then the converse to (1) also holds.
\end{enumerate}
\end{proposition}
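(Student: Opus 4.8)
The plan is to analyze how the central elements $z_P$, $z_B$, and their images in the various subquotients interact with a hypothetical splitting of~\eqref{eq:ses}. The starting observation is that $z_B \in Z(B)$ projects to the central element $z_W \in Z(W)$, and since $Z(W)$ is generated by $z_W$ of order $d$, the element $z_B$ lives in $\widehat{B}_0 = \pi^{-1}(N_W(W_0))$ whenever $z_W \in N_W(W_0)$; this is automatic because $Z(W)$ normalizes every subgroup of $W$, in particular $W_0$. Thus $z_B$ descends to an element $\overline{z_B} := z_B K_0 \in \widetilde{B}_0$, which is central in $\widetilde{B}_0$ (centrality is inherited through the quotient $\widehat{B}_0 \onto \widetilde{B}_0$, using that $z_B$ was already central in $B$). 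Moreover $\overline{z_B}^{\,d} = z_P K_0 = z_{P_0} \in B_0$, using the identification of $z_{P_0}$ with $z_P K_0$ recalled just before the proposition.

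For part (1), I would argue as follows. Suppose~\eqref{eq:ses} splits, giving a complement $C \leq \widetilde{B}_0$ with $\widetilde{B}_0 = B_0 \rtimes C$ and $C \cong N_W(W_0)/W_0$. The image of $\overline{z_B}$ under the projection $\widetilde{B}_0 \onto N_W(W_0)/W_0$ is the class of $z_W$. The key point is to show we can modify $\overline{z_B}$ by an element of $B_0$ to extract the desired $d$-th root. Write $\overline{z_B} = \gamma_0 \, c$ with $\gamma_0 \in B_0$ and $c \in C$ according to the splitting. Since $\overline{z_B}$ is central, it commutes with everything, and I would use this to control the $c$-component. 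The cleanest route: because $\overline{z_B}$ is central and $C$ is a complement, the image of $z_W$ in $N_W(W_0)/W_0$ has some order $e \mid d$, and $c^{\,e} \in B_0 \cap C = \{1\}$, so $c$ has finite order $e$ in $C$. Then $\overline{z_B}^{\,e} = (\gamma_0 c)^e$; using centrality of $\overline{z_B}$ and $c \in C$, this simplifies (all conjugations by $c$ commute with the central element) to an element of $B_0$. I would then take an appropriate power/root combination to produce $\gamma \in B_0$ with $\gamma^d = z_{P_0} = \overline{z_B}^{\,d}$. The candidate is essentially $\gamma = \overline{z_B} \, c^{-1} = \gamma_0$ adjusted so that its $d$-th power absorbs the (finite-order) $C$-part; since $\overline{z_B}^{\,d} = z_{P_0} \in B_0$ and $c^{\,d} = 1$ (as $e \mid d$), one checks $\gamma_0^{\,d} = z_{P_0}$ directly using that $\overline{z_B}$ is central, so $\gamma := \gamma_0 \in B_0$ works.

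For part (2), assume additionally $N_W(W_0) = W_0 \times Z(W)$, and suppose conversely that there exists $\gamma \in B_0$ with $\gamma^{\,d} = z_{P_0}$. I want to build a splitting of~\eqref{eq:ses}. Under the hypothesis $N_W(W_0)/W_0 \cong Z(W) = \langle z_W \rangle$ is cyclic of order $d$, so a splitting amounts to choosing an element of $\widetilde{B}_0$ mapping to a generator of $N_W(W_0)/W_0$ and of order exactly $d$. The natural candidate is $\overline{z_B} \gamma^{-1}$: it projects to the class of $z_W$ in $N_W(W_0)/W_0$ (since $\gamma \in B_0$ dies in the quotient), and since $\overline{z_B}$ is central and $\overline{z_B}^{\,d} = z_{P_0} = \gamma^{\,d}$ with $\gamma \in B_0$, one computes $(\overline{z_B}\gamma^{-1})^{\,d} = \overline{z_B}^{\,d}\gamma^{-d} = z_{P_0} z_{P_0}^{-1} = 1$ using that $\overline{z_B}$ commutes with $\gamma$. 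Hence $c := \overline{z_B}\gamma^{-1}$ has order dividing $d$ and generates a cyclic subgroup $C = \langle c \rangle$ mapping isomorphically onto $N_W(W_0)/W_0$, giving the splitting.

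The main obstacle I anticipate is the bookkeeping in part (1): one must verify carefully that the element $c$ in the decomposition $\overline{z_B} = \gamma_0 c$ genuinely has order dividing $d$ in the complement and that centrality of $\overline{z_B}$ legitimately lets one pull the $d$-th power entirely into $B_0$. This hinges on the fact that in $Z(W)$ the element $z_W$ has order exactly $d$, so its image in $N_W(W_0)/W_0$ has order $e$ dividing $d$, and on the commutativity relations in $\widetilde{B}_0$ inherited from the centrality of $z_B$ in $B$. Everything else is a direct manipulation with the distinguished central elements; the real content is identifying $\overline{z_B}^{\,d} = z_{P_0}$ as the right target and recognizing that the obstruction to splitting is precisely the existence of a $d$-th root of $z_{P_0}$ inside $B_0$.
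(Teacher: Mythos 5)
Your proof is correct and takes essentially the same approach as the paper: both decompose the central element $z_B K_0$ (whose $d$-th power is $z_{P_0}$) according to the splitting, and use its centrality in $\widetilde{B}_0$ to reduce the splitting condition to the equation $\gamma^d = z_{P_0}$ in $B_0$. The only difference is presentational — you phrase part (1) via the complement subgroup $C$ and the order of the component $c$ of $z_BK_0$, where the paper applies the section homomorphism to $(z_WW_0)^d=1$ — and your part (2) simply makes explicit the cyclic complement $\langle z_BK_0\gamma^{-1}\rangle$ that the paper leaves implicit.
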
 
\begin{proof}
Note that $z_B K_0\in\widetilde{B}_0$ maps to $z_W W_0\in N_W(W_0)/W_0$ under the homomorphism in~\eqref{eq:ses}. 
Since $(z_W W_0)^d=1W_0$ holds in $N_W(W_0)/W_0$, the splitting of~\eqref{eq:ses} implies that for some $\gamma\in B_0$ we have $(\gamma^{-1}(z_B K_0))^d=1K_0$ in $\widetilde{B}_0$. Moreover, if $N_W(W_0)=W_0\times Z(W)$, then the splitting of~\eqref{eq:ses} is equivalent to the existence of such $\gamma\in B_0$. Since $z_B K_0\in Z(\widetilde{B}_0)$ and $(z_B K_0)^d=z_P K_0=z_{P_0}$, the equation $(\gamma^{-1}(z_B K_0))^d=1K_0$ in $\widetilde{B}_0$ is equivalent to the equation $\gamma^d=z_{P_0}$ in $B_0$.
\end{proof}

In the special case when $W_0=\langle s\rangle$ is cyclic of order $m\geq 2$, we have that $B_0=\langle\sigma\rangle$ is infinite cyclic with $\sigma^m=z_{P_0}$, so the existence of $\gamma\in B_0$ such that $\gamma^d=z_{P_0}$ is equivalent to $d\mid m$. This means that if $d\nmid m$, Proposition~\ref{prop:central}(1) guarantees that~\eqref{eq:ses} does not split. Example~\ref{ex:cyclic2} was such a case, and we can now easily find similar non-splitting examples with $W_0$ parabolic. The examples below all have the property that $N_W(W_0)=W_0\times Z(W)$.
\begin{example} \label{ex:g312}
Let $W=G(3,1,2)$, for which $d=3$, and let $W_0$ be a rank-$1$ parabolic subgroup generated by a reflection of order $2$. Then~\eqref{eq:ses} does not split. 
\end{example}
\begin{example}
Let $W=G(4,2,2)$ as in Example~\ref{ex:g422}, for which $d=4$, and let $W_0$ be any rank-$1$ parabolic subgroup, necessarily of order $2$. Then~\eqref{eq:ses} does not split. 
\end{example}
\begin{example}
Let $W$ be the rank-$2$ primitive irreducible reflection group of order $24$ known as $G_4$ in the Shephard-Todd numbering, for which $d=2$. It is a Shephard group, with braid group the Artin group of type $A_2$. Let $W_0$ be a rank-$1$ parabolic subgroup, necessarily of order $3$. Then~\eqref{eq:ses} does not split. 
\end{example}

Much is known about the existence of roots of the canonical element $z_P$ in the braid group $B$. In particular, if $W$ is irreducible and \emph{well-generated}, meaning that it can be generated by $n$ reflections, then Bessis proved in~\cite[Theorem 12.4(i)]{BESSISKPI1} that for any positive integer $m$, there exists an element $\gamma\in B$ such that $\gamma^m=z_P$ if and only if $m$ is \emph{regular} for $W$ (the ``if'' direction is easy). Recall that the regular numbers for $W$ are the orders of those roots of unity which arise as eigenvalues for elements of $W$ where the corresponding eigenvector belongs to the hyperplane complement $X$. The regular numbers can be deduced from the degrees and codegrees of $W$ as explained in~\cite[Theorem 1.9(1)]{BESSISKPI1}.

So Proposition~\ref{prop:central} implies the following result.
\begin{corollary} \label{cor:regular}
Suppose that $W$ is irreducible with $d=|Z(W)|$, and that $W_0$ is a reflection subgroup of $W$ such that each irreducible constituent of $W_0$ is well-generated. 
\begin{enumerate}
\item If~\eqref{eq:ses} splits, then $d$ is regular for each irreducible constituent of $W_0$.
\item If $N_W(W_0)=W_0\times Z(W)$, then the converse to (1) also holds.
\end{enumerate}
\end{corollary}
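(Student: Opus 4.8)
The plan is to deduce the corollary by combining Proposition~\ref{prop:central} with Bessis's theorem, the only additional ingredient being a reduction from $W_0$ to its irreducible constituents. By Proposition~\ref{prop:central}, the splitting of~\eqref{eq:ses} implies (and, when $N_W(W_0)=W_0\times Z(W)$, is equivalent to) the existence of an element $\gamma\in B_0$ with $\gamma^d=z_{P_0}$. So it suffices to show that such a $\gamma$ exists if and only if $d$ is regular for each irreducible constituent of $W_0$.

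First I would set up the product decomposition. Writing $\C^n=V_0\oplus V_1\oplus\cdots\oplus V_r$, where $V_0$ is the subspace fixed pointwise by $W_0$ and each $V_j$ (for $j\geq 1$) carries an irreducible constituent $W_0^{(j)}$ of $W_0$, each reflecting hyperplane of $W_0$ has the form (everything)$\,\times H^{(j)}$ in the $V_j$-coordinate for a single $j$, so the arrangement $\mathcal{A}_0$ is the union of the pullbacks of the arrangements of the $W_0^{(j)}$. Hence the hyperplane complement factors as $X^0\cong V_0\times\prod_{j}X^{0,(j)}$, where $X^{0,(j)}$ is the complement for $W_0^{(j)}$. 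Since $V_0$ is contractible and $W_0=\prod_j W_0^{(j)}$ acts factorwise, passing to fundamental groups of the quotients yields $B_0\cong\prod_j B_0^{(j)}$ and $P_0\cong\prod_j P_0^{(j)}$, where $B_0^{(j)}$ and $P_0^{(j)}$ denote the braid and pure braid groups of $W_0^{(j)}$.

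Next I would identify the canonical central element under this decomposition. The loop $t\mapsto\exp(2\pi\sqrt{-1}\,t)\tilde{x}$ defining $z_{P_0}$ rotates every coordinate of $\tilde{x}$ simultaneously through a full turn; its component in the contractible factor $V_0$ is null-homotopic, while its component in each $X^{0,(j)}$ is precisely the loop defining the canonical central element $z_{P_0^{(j)}}$ of $P_0^{(j)}$. Therefore $z_{P_0}$ corresponds to $(z_{P_0^{(1)}},\ldots,z_{P_0^{(r)}})$ under $P_0\cong\prod_j P_0^{(j)}$. Writing $\gamma=(\gamma_1,\ldots,\gamma_r)$, the equation $\gamma^d=z_{P_0}$ holds if and only if $\gamma_j^{\,d}=z_{P_0^{(j)}}$ for every $j$, so a solution $\gamma\in B_0$ exists if and only if a solution $\gamma_j\in B_0^{(j)}$ exists in each factor.

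Finally, since each $W_0^{(j)}$ is irreducible and well-generated by hypothesis, Bessis's theorem~\cite[Theorem 12.4(i)]{BESSISKPI1} tells us that $\gamma_j^{\,d}=z_{P_0^{(j)}}$ has a solution in $B_0^{(j)}$ if and only if $d$ is regular for $W_0^{(j)}$. Combining the last two steps, an element $\gamma\in B_0$ with $\gamma^d=z_{P_0}$ exists if and only if $d$ is regular for every irreducible constituent of $W_0$, and both parts of the corollary then follow from the corresponding parts of Proposition~\ref{prop:central}. The only step requiring genuine care is the identification of $z_{P_0}$ with the product of the $z_{P_0^{(j)}}$, which rests on the factorization of the hyperplane complement above; everything else is a direct appeal to the two quoted results.
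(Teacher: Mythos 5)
Your proposal is correct and follows the same route as the paper: the paper derives the corollary directly from Proposition~\ref{prop:central} together with Bessis's theorem~\cite[Theorem 12.4(i)]{BESSISKPI1}, leaving the reduction to irreducible constituents implicit. Your explicit factorization $X^0\cong V_0\times\prod_j X^{0,(j)}$, the resulting decomposition $B_0\cong\prod_j B_0^{(j)}$, and the identification of $z_{P_0}$ with the tuple of the $z_{P_0^{(j)}}$ are exactly the details the paper takes for granted, and you have verified them correctly.
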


\bibliography{normartin_biblio}

\end{document}